\documentclass[11pt]{article}
\usepackage{setspace}
\usepackage{mathrsfs}
\usepackage{mathrsfs}
\usepackage{mathrsfs}
\usepackage{graphicx}
\usepackage{epstopdf}
\usepackage{multirow}
\usepackage{subfigure}

\bibliographystyle{unsrt}
\usepackage{cite}
\usepackage {enumerate}
\textwidth 160mm
\textheight 235mm \oddsidemargin 0.3cm
\evensidemargin 0.3cm
\topmargin -1cm

\headsep=0.8cm
\usepackage{amsfonts}
\usepackage{amssymb}
\usepackage{amsmath}
\usepackage{amsthm}
\usepackage{algorithm}
\usepackage{algorithmic}

\usepackage[colorlinks=true]{hyperref}
\usepackage{hyperref}
\hypersetup{linkcolor=blue}
\theoremstyle{plain}
\newtheorem{thm}{Theorem}[section]

\newtheorem{lem}[thm]{Lemma}

\makeatletter
\def\@cite#1#2{\textsuperscript{[{#1\if@tempswa , #2\fi}]}}
\makeatother

\theoremstyle{assumption}

\theoremstyle{definition}
\newtheorem{defn}{Definition}[section]

\theoremstyle{remark}
\newtheorem{remk}{Remark}[section]

\begin{document}
	\begin{spacing}{0.5}
	\end{spacing}
	\title{{\Large\bf {An inexact Bregman proximal point method and its acceleration version for unbalanced optimal transport}}
		}
	\author{{\normalsize Xiang Chen     \,\,\,\,Faqiang Wang  \,\,\,\,Jun Liu \,\,\,\,  Li Cui{\thanks{Corresponding author: Li Cui}}}\\}
	\date{{\footnotesize Laboratory of Mathematics and Complex Systems (Ministry of Education), School of Mathematical Sciences, Beijing Normal University, Beijing 100875, People's Republic of China}\\}
	\maketitle
	\begin{minipage}{14cm} {\bf Abstract:}
		{\footnotesize The Unbalanced Optimal Transport (UOT) problem plays increasingly important roles in computational biology, computational imaging and deep learning. Scaling algorithm is widely used to solve UOT due to its convenience and good convergence properties. However, this algorithm has lower accuracy for large regularization parameters, and due to stability issues, small regularization parameters can easily lead to numerical overflow. We address this challenge by developing an inexact Bregman proximal point method for solving UOT. This algorithm approximates the proximal operator using the Scaling algorithm at each iteration. 
			The algorithm (1) converges to the true solution of UOT, (2) has theoretical guarantees and robust regularization parameter selection, (3) mitigates numerical stability issues, and (4) can achieve comparable computational complexity to the Scaling algorithm in specific practice. Building upon this, we develop an accelerated version of inexact Bregman proximal point method for solving UOT by using acceleration techniques of Bregman proximal point method and provide theoretical guarantees and experimental validation of convergence and acceleration. 
		}\\
		\textbf{Keywords:} unbalanced optimal transport, Bregman proximal point algorithm, inexact version, acceleration   \\
	\end{minipage}

	\maketitle
	\numberwithin{equation}{section}
	\newtheorem{theorem}{Theorem}[section]
	\newtheorem{lemma}[theorem]{Lemma}
	\newtheorem{proposition}[theorem]{Proposition}
	\newtheorem{corollary}[theorem]{Corollary}
	\newtheorem{assumption}[theorem]{Assumption}

\section{Introduction}

The Optimal Transport (OT) problem is a problem of finding the optimal cost for transporting mass from one distribution to another \cite{villani2021topics}.  This problem in mathematics and operations research has been proposed for many years. Due to the pioneering work of Brenier\cite{brenier1991polar}, the problem has received renewed attention and has become increasingly popular in the past decade in application areas such as image retrieval and color transfer in computer vision \cite{rabin2015convex,rubner2000earth}, as well as statistical inference in machine learning \cite{solomon2014wasserstein}.

However, the classic OT imposes a very strong constraint, which requires us to normalize the input measure to unit mass. This is an unacceptable constraint for many problems that allow only partial mass transfer, or for handling transfer problems between arbitrary positive measures. In order to address such transfer problems that do not guarantee mass conservation, a modified version of the OT problem was proposed, known as the Unbalanced Optimal Transport (UOT) problem. This problem is obtained by relaxing the hard constraint of optimal transport to a soft constraint \cite{frogner2015learning}.
In recent years, the problem of UOT has been widely applied in computational biology \cite{schiebinger2019optimal}, computer imaging \cite{lee2019parallel}, full waveform inversion \cite{li2022application}, deep learning \cite{yang2018scalable}, and statistics \cite{janati2020spatio}.

In this paper, we focus on the computation of the discrete UOT problem using KL divergence to relax constraints.
\begin{equation}
	\min_{\bf{P}\geqslant 0}\langle\bf {C}, \bf{P}\rangle+
	\lambda_{1} \mathrm{KL}(\bf{P}\bf{1}_{m}\mid \bf{a})+
	\lambda_{2} \mathrm{KL}(\bf{P}^{T}\bf{1}_{n} \mid \bf{b}).
	\label{eq:uot}
\end{equation}
Where $\bf{a}$,$\bf{b}$ are two positive vectors. Matrix $\bf{C}=[ c_{ij} ]\in\mathbb{R}_{+}^{n \times m}$ is the cost matrix, whose element $c_{ij} $ represents the distance between the $i$-th support point of
$\bf{a}$ and the $j$-th support point of $\bf{b}$. $\lambda_{1}$,$\lambda_{2}$ are unbalancedness parameters. KL$(\cdot \mid \cdot)$ is defined by KL$(\bf{x} \mid \bf{y})=\sum_{i} \bf{x}_{i}\log \frac{\bf{x}_{i}}{\bf{y}_{i}} -\bf{x}_{i} +\bf{y}_{i}$. Notation $\langle \cdot ,\cdot \rangle$ represents the Frobenius dot-product. Finally, $\bf{1}_{n} $ represents $n$-dimensional vector of ones.

In order to facilitate the solution of OT problems, Cuturi proposed to use the entropy of the transport plan to regularize the objective function, based on which a fast algorithm for solving OT problems, i.e., the Sinkhorn algorithm \cite{cuturi2013sinkhorn}. Similar to the process of extending the OT problem to the UOT problem, this method has also been extended to solve UOT problems \cite{chizat2018scaling}. That is, by adding an entropy term to the UOT problem to regularize the original problem. For example, for the UOT problem defined by KL divergence, its entropy regularized problem is represented as follows.

\begin{equation}
		\min_{\bf{P}\geqslant 0}\langle\bf {C}, \bf{P}\rangle+
	\lambda_{1} \mathrm{KL}(\bf{P}\bf{1}_{m}\mid \bf{a})+
	\lambda_{2} \mathrm{KL}(\bf{P}^{T}\bf{1}_{n} \mid \bf{b})+
	\epsilon h(\bf{P}),
	\label{eq:euot}
\end{equation}
where $h(\bf{P})=\sum_{i,j}\bf{P}_{i,j}(\log \bf{P}_{i,j}-1)$ is the entropic regularizer. Then the problem can be solved rapidly by the Scaling algorithm,

\begin{equation}
	\bf{u}^{(l+1)}=(\frac{\bf{a}}{\bf{K} \bf{v}^{(l)}})^{\frac{\lambda_{1}}{\lambda_{1}+\epsilon}}, \quad \bf{v}^{(l+1)}=(\frac{\bf{b}}{\bf{K}^{T} \bf{u}^{(l+1)}})^{\frac{\lambda_{2}}{\lambda_{2}+\epsilon}},
	\label{eq:scalg}
\end{equation}
starting from $\bf{v}^{(l)}=\bf{1}_{m}$, where $\bf{K}=[K_{i,j}]$ and $ \bf{K}_{i,j}=e^{-\bf{C}_{i j} / \epsilon}$. And the optimal solution $\bf{P}^{*}$ has the form $\bf{P}^{*}_{i,j}=\bf{u}_{i}\bf{K}_{i,j}\bf{v}_{j} $. This algorithm is actually based on the well-known Dykstra's algorithm \cite{bauschke2000dykstras,benamou2015iterative}. In fact, the Scaling algorithm is proven to achieve a $\mathcal{O}(n^2)$ complexity \cite{altschuler2017near,pham2020unbalanced}.

Although the Scaling algorithm can be solved through matrix vector products, with a simple and easily implementable form and good computational complexity, it also has several drawbacks. Firstly, for the entropy regularization parameter $\epsilon$ in \eqref{eq:euot}, when it is set to a larger value, the algorithm may converge quickly, but the error of the approximate solution obtained will be large. Secondly, if we desire an approximate solution with a small error, we need to choose a very small $\epsilon$, but due to $\bf{K}_{i,j}=e^{-\bf{C}_{i j} / \epsilon} $, a small $\epsilon$ may lead to numerical overflow when computing $\bf{K}$, although this issue can be resolved by calculating in log-space , it requires additional exponential and logarithmic operations, sacrificing the efficiency advantage of the algorithm \cite{chizat2018scaling,benamou2015iterative}.
In addition, since the contraction ratio of the Scaling algorithm is determined by $(1+\frac{\epsilon}{\lambda})^{-1}$, a small $\epsilon$ will cause a sharp increase in the number of iterations of the algorithm. Finally, the Scaling algorithm will also sacrifice the sparsity of the solution, which is disadvantageous for the application of UOT.

So, can we use a moderate entropy regularization parameter $\epsilon$ to obtain a relatively accurate approximate solution without increasing the computational cost? In the work of Xie and YANG \cite{xie2020fast,yang2022bregman}, they solve the original OT problem by using the Bregman proximal point method, where the subproblems constructed in the iterative step can be formulated as an entropy-regularized optimal transport problem, and the approximate solution to the subproblems can be obtained using the Sinkhorn algorithm.
The IPOT algorithm and the iEPPA algorithm constructed in this way can both use a moderately sized entropy regularization parameter $\epsilon$ to obtain a relatively accurate approximate solution to the OT problem, and have good sparsity. This give us a lot of inspiration.  We have applied this idea to UOT problem and obtained a new algorithm, i.e, Inexact Bregman Proximal point method for solving Unbalanced Optimal Transport (IBPUOT). 

Since IBPUOT is a first order algorithm, we can use he acceleration technique to accelerate the algorithm IBPUOT. Accelerated thinking originated from Nesterov, whose work inspired various extensions and accelerated variants \cite{nesterov1988approach,nesterov1983method}. For example, the classic accelerated proximal point method \cite{guler1992new}, as well as the recent accelerated Bregman proximal point method \cite{yang2022bregman,yan2020bregman}, and accelerated variants of Bregman proximal gradient method \cite{hanzely2021accelerated}. Based on these works, we naturally developed an Accelerated version of IBPUOT called AIBPUOT. In fact, although IBPUOT can accelerate convergence speed by taking small proximal parameters, small proximal parameters will lead to increased internal iterations in solving UOT problems, thus preventing acceleration.Therefore, it is necessary for us to develop an acceleration variant AIBPUOT that does not require the use of smaller proximal parameters.

The contributions of this paper are summarized as follows.
\begin{itemize}
	\item In this paper, we propose a new method for solving the UOT problem defined by KL divergence based on the generalized proximal point method using Bregman divergence --- IBPUOT. 
	This algorithm can select a moderately sized $\epsilon$ to obtain the exact solution of UOT. The convergence proof of the algorithm IBPUOT is provided, and it is proven that the convergence rate of IBPUOT is $\mathcal{O}(\frac{1}{N})$. 
	
	\item We also combine Nesterov's acceleration technique to give the specific form of the accelerated version of IBPUOT, we call it AIBPUOT. And  we complete the corresponding convergence proof and convergence rate analysis. By making use of the triangle scaling property of the Bregman distance, we prove that the convergence rate of AIBPUOT is $\mathcal{O}(\frac{1}{N^{\gamma}})$, where $\gamma$ is the triangle scaling exponent.
	
	\item We conducted numerical experiments to compare the performance of IBPUOT and the Scaling algorithm under different entropy regularization parameters, as well as the acceleration effect of AIBPUOT.

\end{itemize}

Next, we will provide notation and preliminaries in the Section \ref{sec:2}. We present the specific form of IBPUOT and its convergence proof, and prove that its convergence rate in Section \ref{sec:3} . We give the form of AIBPUOT and complete the corresponding convergence proof and convergence rate analysis in Section \ref{sec:4}. We presented some numerical results in Section \ref{sec:5}.

\section{Notation and preliminaries}
\label{sec:2}
For a vector $\bf{x} \in \mathbb{R}^{n} $, we denote its $i$-th entry as $\bf{x}_{i}$ , and the diagonal matrix is denoted as $Diag(\bf{x})$, whose $i$-th diagonal entry is $\bf{x}_{i}$. 
Denote $\odot$ as element-wise matrix multiplication, $\frac{(\cdotp)}{(\cdotp)}$ as element-wise division. 
For a given function $f:\mathbb{R}^{n}\rightarrow\mathbb{R}$, dom $f$ :=$\{\mathbf{x} \in \mathbb{R}^{n},f(x)<\infty\}$

\begin{defn}[$\nu $ -subdifferential\cite{chu2023efficient}]
	For a proper convex function $f:\mathcal{X}\subseteq\mathbb{R}^{n}\rightarrow\mathbb{R}$, give a $\nu \geq 0 $, the $\nu $-subdifferential of $f$ at $\bf{x} \in$ dom $f$ is defined by
	\begin{equation*}
	\partial_{\nu}f:=\{ \mathbf{d} \in \mathbb{E}: \mathit{f}(\bf{y})\geqslant \mathit{f}(\bf{x})+\langle\bf{d},\bf{y}-\bf{x}\rangle-\nu,\forall \bf{y} \in \mathcal{X}\},
	\end{equation*}
	when $\nu=0$, $ \partial_{\nu}f$ is denoted by $\partial f$
\end{defn}

\begin{defn}[Legendre function\cite{xie2020fast}]
	Let $h:\mathcal{X}\subseteq\mathbb{R}^{n}\rightarrow\mathbb{R}$ be a lsc proper convex function. When it satisfies the following properties, we call it Legendre function
	\begin{itemize}
		\item [1)]Essentially smooth: if $h$ is differentiable on int dom $h$, for every sequence $\{\mathbf{x}^{k}\}\subseteq$ int dom $h$ converging to a boundary point of dom $h$ as $k\rightarrow +\infty$, $\|\nabla h(\mathbf{x}^{k})\|\rightarrow \infty$
		\item [2)]Legendre type: if $h$ is essentially smooth and strictly convex on int dom $h$.
	\end{itemize}
\end{defn}

\begin{defn}[Bregman distance\cite{xie2020fast}]
	For any given Legendre function $h:\mathcal{X}\subseteq\mathbb{R}^{n}\rightarrow\mathbb{R}$, and for any $ \mathbf{x} \in$ dom $h$, $\mathbf{y}\in$ int dom $h$,
	\begin{equation}
			D_{h}(\mathbf{y},\mathbf{x})=h(\mathbf{y})-h(\mathbf{x})-\langle\nabla h(\mathbf{x}),\mathbf{y}-\mathbf{x}\rangle.
	\end{equation}
	It is easy to see that $D_{h} (\mathbf{x} , \mathbf{y}) \geq 0$ and equality holds if and only if $\mathbf{x}=\mathbf{y}$ due to the strictly convexity of $h$.
\end{defn}

\begin{lem}[three points identity\cite{xie2020fast}]
	Given a proper closed strictly convex function $h:\mathcal{X}\subseteq\mathbb{R}^{n}\rightarrow\mathbb{R}$, $D_{h}$ is a general Bregman distance, and $\mathbf{x},\mathbf{y},\mathbf{z}\in \mathcal{X}$ such that $h(\mathbf{x}),h(\mathbf{y}),h(\mathbf{z})$ are finite and $h$ is differentiable at $\mathbf{y}$ and $\mathbf{z}$,
	\begin{equation}
			D_{h}(\mathbf{x},\mathbf{z})-D_{h}(\mathbf{x},\mathbf{y})-D_{h}(\mathbf{y},\mathbf{z})=\langle\nabla h(\mathbf{y})-\nabla h(\mathbf{z}),\mathbf{x}-\mathbf{y}\rangle.
			\label{three point}
	\end{equation}
\end{lem}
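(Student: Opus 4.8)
The plan is to prove the identity by brute-force substitution of the definition of the Bregman distance into each of the three terms and then collecting like terms; nothing beyond the defining formula $D_h(\mathbf{u},\mathbf{v}) = h(\mathbf{u}) - h(\mathbf{v}) - \langle \nabla h(\mathbf{v}),\mathbf{u}-\mathbf{v}\rangle$ is required. The differentiability hypotheses at $\mathbf{y}$ and $\mathbf{z}$ are exactly what makes the symbols $\nabla h(\mathbf{y})$ and $\nabla h(\mathbf{z})$ appearing in the statement meaningful, and finiteness of $h(\mathbf{x}),h(\mathbf{y}),h(\mathbf{z})$ ensures that all the scalar subtractions below are genuine real-number manipulations with no $\infty-\infty$ ambiguity.

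First I would write out the three pieces explicitly: $D_h(\mathbf{x},\mathbf{z}) = h(\mathbf{x}) - h(\mathbf{z}) - \langle \nabla h(\mathbf{z}),\mathbf{x}-\mathbf{z}\rangle$, then $D_h(\mathbf{x},\mathbf{y}) = h(\mathbf{x}) - h(\mathbf{y}) - \langle \nabla h(\mathbf{y}),\mathbf{x}-\mathbf{y}\rangle$, and $D_h(\mathbf{y},\mathbf{z}) = h(\mathbf{y}) - h(\mathbf{z}) - \langle \nabla h(\mathbf{z}),\mathbf{y}-\mathbf{z}\rangle$. Next I would form the combination $D_h(\mathbf{x},\mathbf{z}) - D_h(\mathbf{x},\mathbf{y}) - D_h(\mathbf{y},\mathbf{z})$ and note that the function values cancel in pairs: $h(\mathbf{x})$ occurs once with a $+$ and once with a $-$; $h(\mathbf{y})$ occurs with a $+$ from $-D_h(\mathbf{x},\mathbf{y})$ and a $-$ from $-D_h(\mathbf{y},\mathbf{z})$; and $h(\mathbf{z})$ occurs with a $-$ from $D_h(\mathbf{x},\mathbf{z})$ and a $+$ from $-D_h(\mathbf{y},\mathbf{z})$. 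What survives is the inner-product remainder $\langle \nabla h(\mathbf{y}),\mathbf{x}-\mathbf{y}\rangle - \langle \nabla h(\mathbf{z}),\mathbf{x}-\mathbf{z}\rangle + \langle \nabla h(\mathbf{z}),\mathbf{y}-\mathbf{z}\rangle$.

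Finally I would group the two $\nabla h(\mathbf{z})$ terms using bilinearity of $\langle\cdot,\cdot\rangle$: $\langle \nabla h(\mathbf{z}), -(\mathbf{x}-\mathbf{z}) + (\mathbf{y}-\mathbf{z})\rangle = \langle \nabla h(\mathbf{z}),\mathbf{y}-\mathbf{x}\rangle = -\langle \nabla h(\mathbf{z}),\mathbf{x}-\mathbf{y}\rangle$, and combining with the leftover $\langle \nabla h(\mathbf{y}),\mathbf{x}-\mathbf{y}\rangle$ yields $\langle \nabla h(\mathbf{y}) - \nabla h(\mathbf{z}),\mathbf{x}-\mathbf{y}\rangle$, which is precisely the right-hand side of \eqref{three point}.

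Since every step is an elementary algebraic identity, there is no genuine obstacle here; the only thing worth a sentence of care is checking that the hypotheses in the statement (finiteness of the three function values, differentiability of $h$ at $\mathbf{y}$ and $\mathbf{z}$) are exactly those under which every term written down is well defined, so the cancellation is legitimate. I would also remark in passing that strict convexity of $h$ plays no role in this particular identity — it is only needed for the nonnegativity of $D_h$ asserted earlier — so the lemma in fact holds for any differentiable (at $\mathbf{y},\mathbf{z}$) proper closed function.
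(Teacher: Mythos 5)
Your computation is correct: expanding all three Bregman distances by the defining formula, cancelling the $h$-values in pairs, and regrouping the two $\nabla h(\mathbf{z})$ inner products by bilinearity gives exactly the right-hand side. The paper itself offers no proof of this lemma — it is stated with a citation to the reference — but the direct substitution you carry out is the standard (and essentially the only) argument, and your closing remark that strict convexity is irrelevant to this particular identity is also accurate.
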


\begin{lem}[see\cite{lemaire1995convergence}]
	Suppose that $\{\mu_{n}\}_{n=0}^{\infty} \subseteq \mathbb{R}_{+}$  and $\{\beta_{n}\}_{n=0}^{\infty} \subseteq \mathbb{R}$ are two sequences. Let $\tau_{n}:=\sum_{k=0}^{n}\mu_{k}$ and $\alpha_{n}:=\tau_{n}^{-1}\sum_{k=0}^{n}\mu_{k}\beta_{k}$.
	\item (1) $\liminf _{n \rightarrow+\infty} \beta_{n} \leqslant \liminf _{n \rightarrow+\infty} \alpha_{n} \leqslant \limsup _{n \rightarrow+\infty} \alpha_{n} \leqslant \limsup _{n \rightarrow+\infty} \beta_{n}$.
	\item (2) If $\lambda_{n}\rightarrow+\infty$, $\beta:=\lim_{n\rightarrow+\infty}\beta_{n}$ exists, then $\alpha_{n}\rightarrow \beta$.
	\label{lem:sequence}
\end{lem}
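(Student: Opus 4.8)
The plan is to view $\alpha_n$ as a weighted average of $\beta_0,\dots,\beta_n$ with nonnegative weights $\mu_k/\tau_n$ that sum to $1$, and to exploit that $\tau_n=\sum_{k=0}^n\mu_k\to+\infty$ — I read this divergence, which is invoked explicitly in part (2) and is genuinely needed in part (1) as well, as a standing hypothesis — so that these weights become negligible on any fixed initial block while the weights carried by the tail sum to $1$ in the limit. In (1) the middle inequality is trivial, and the two outer inequalities are mirror images of each other under $\beta\mapsto-\beta$ (which sends $\alpha\mapsto-\alpha$), so I would only need to establish $\limsup_{n\to+\infty}\alpha_n\le\limsup_{n\to+\infty}\beta_n$.

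For that bound I would set $L:=\limsup_n\beta_n$; if $L=+\infty$ there is nothing to prove, so suppose $L<+\infty$, fix $\epsilon>0$, and choose $N$ with $\beta_k\le L+\epsilon$ for every $k\ge N$. Splitting the defining sum of $\alpha_n$ at the index $N$ then gives, for $n\ge N$,
\begin{equation*}
\alpha_n=\frac{1}{\tau_n}\sum_{k=0}^{N-1}\mu_k\beta_k+\frac{1}{\tau_n}\sum_{k=N}^{n}\mu_k\beta_k\le\frac{1}{\tau_n}\sum_{k=0}^{N-1}\mu_k\beta_k+(L+\epsilon)\,\frac{\tau_n-\tau_{N-1}}{\tau_n},
\end{equation*}
where the inequality uses $\mu_k\ge0$ together with $\beta_k\le L+\epsilon$ on the tail. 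Letting $n\to+\infty$, the first term tends to $0$ because its numerator is a fixed constant and $\tau_n\to+\infty$, and $(\tau_n-\tau_{N-1})/\tau_n\to1$ for the same reason; hence $\limsup_n\alpha_n\le L+\epsilon$, and letting $\epsilon\downarrow0$ yields $\limsup_n\alpha_n\le L$. Running the identical argument with the inequalities reversed and the lower cutoff $\beta_k\ge\ell-\epsilon$ (where $\ell:=\liminf_n\beta_n$, the case $\ell=-\infty$ being vacuous) gives $\liminf_n\beta_n\le\liminf_n\alpha_n$, which completes (1).

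Part (2) would then be immediate from (1): if $\beta:=\lim_n\beta_n$ exists and is finite, then $\liminf_n\beta_n=\limsup_n\beta_n=\beta$, so (1) squeezes $\beta\le\liminf_n\alpha_n\le\limsup_n\alpha_n\le\beta$ and hence $\alpha_n\to\beta$; if $\beta=\pm\infty$, the same split — keeping only the one-sided tail bound $\beta_k\ge M$ (resp.\ $\beta_k\le-M$) for arbitrarily large $M$ — shows $\alpha_n\to\beta$ directly. I do not anticipate a genuine obstacle here: this is the standard regularity property of a nonnegative weighted-mean (Toeplitz) summation method, closely related to the Stolz--Ces\`aro theorem applied to $s_n:=\sum_{k=0}^n\mu_k\beta_k$ and $t_n:=\tau_n$, since $(s_n-s_{n-1})/(t_n-t_{n-1})=\beta_n$. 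The only points that require care are the bookkeeping in the split at $N$ and the tacit but essential use of $\tau_n\to+\infty$ in part (1).
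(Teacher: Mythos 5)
The paper does not prove this lemma at all: it is stated as a black-box citation of Lemaire's result, so there is no in-paper argument to compare yours against. That said, your proof is correct and is exactly the standard regularity argument for a nonnegative Toeplitz averaging operator: split the weighted sum at a finite cutoff $N$ chosen from the $\limsup$, observe that the head contributes a constant divided by $\tau_n$ and hence vanishes, and that the tail carries total weight $(\tau_n-\tau_{N-1})/\tau_n\to 1$, then let $\epsilon\downarrow 0$; the $\liminf$ bound follows by the $\beta\mapsto-\beta$ symmetry and part (2) by squeezing (with the separate one-sided split handling $\beta=\pm\infty$). You are also right on the one substantive point of interpretation: the hypothesis $\tau_n\to+\infty$ is needed for part (1) as well, not only part (2), and the ``$\lambda_n\to+\infty$'' in the paper's statement is evidently a typo for $\tau_n\to+\infty$. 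Indeed, if the weights are eventually zero --- say $\mu_0=1$ and $\mu_k=0$ for $k\ge 1$ --- then $\alpha_n\equiv\beta_0$ for every $n$ while $\limsup_n\beta_n$ can be strictly smaller, so the chain of inequalities in (1) fails without that divergence assumption. Reading it as a standing hypothesis, as you do, is the right fix.
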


Consider the following convex optimization problem,
\begin{equation}
		\min_{\mathbf{x}}f(\mathbf{x})\quad\text{s.t.}\quad \mathbf{x}\in  \overline{\mathcal{X}},
\end{equation}
where $f:\overline{\mathcal{X}}\subseteq\mathbb{R}^{n}\rightarrow \mathbb{R}$ is a proper closed convex function, $\mathcal{X}$ is a nonempty convex open set, $\overline{\mathcal{X}}$ is the closure of $\mathcal{X}$.

There are currently many methods to solve this convex optimization problem, among which the simplest and most basic method is the proximal point method. Now we introduce its general form, the objective of the generalized proximal point method is to solve
\begin{equation}
	\arg \min _{\mathbf{x}\in \overline{\mathcal{X}}} f(\mathbf{x}).
	\label{ppap}
\end{equation}

The algorithm generates a sequence $\{\mathbf{x}^{k}\}$ by the following generalized
proximal point iterations to solve Problem \eqref{ppap},
\begin{equation}
		\bf{x}^{k+1}= \arg \min_{\bf{x} \in \overline{\mathcal{X}}} \mathit{f}(\bf{x})+\beta_{k} \mathit{d}(\bf{x},\bf{x}^{k}),
	\label{eq:ppa}
\end{equation}
where $\beta_{t}>0$ is a given proximal parameter, and $d$ is a regularization term used to define the proximal operator. When $d(\bf{x},\bf{y})=\frac{1}{2}\| \bf{x}-\bf{y} \|_{2}^{2}$, it is the classical proximal point method.

If we choose $d(\bf{x},\bf{y})=\mathit{D}_{\mathit{h}}(\bf{x},\bf{y})$, equation \eqref{eq:ppa} becomes the following form
\begin{equation}
	\bf{x}^{k+1}= \arg \min_{\bf{x} \in \overline{\mathcal{X}}} \mathit{f}(\bf{x})+\beta_{k} \mathit{D}_{\mathit{h}}(\bf{x},\mathbf{x}^{k}).
	\label{eq:bppa}
\end{equation}

This method is commonly referred to as the Bregman proximal point algorithm (BPPA). As the iterate $\bf{x}^{k+1}$ is obtained by approximately solving a subproblem during the iterative process, we need to establish a termination condition to end the process of approximating the subproblem. In this regard, a framework based on $\nu$-th differentiability is widely used \cite{kiwiel1997proximal,teboulle1997convergence}, and can be applied to the inexact Bregman proximal point algorithm as follows.
\begin{equation}
	0 \in \partial_{\nu_{k}}f(\bf{x}^{k+1})+\beta_{k}(\nabla \mathit{h}(\mathbf{x}^{k+1})- \nabla \mathit{h}(\mathbf{x}^{k})).
\end{equation}

The proximal point method has a robust convergence behavior for the parameter $\beta$. As for $\beta_{k}$, its specific choice generally only affects the convergence speed of the algorithm, and does not affect the convergence of the algorithm. In other words, there can be a relatively broad condition for the choice of $\beta_{k}$, which still ensures the accuracy of the algorithm. Even if the proximal operator defined in \eqref{eq:ppa} cannot be precisely solved during iteration, the algorithm can still guarantee global convergence under certain conditions \cite{solodov2001unified,schmidt2011convergence}. This is also the key reason why we choose to use the inexact proximal point method to solve the problem of UOT.

\section{An inexact Bregman proximal point algorithm for unbalanced optimal transport}
\label{sec:3}
In this section, we explain how to use the Bregman proximal point method to solve the UOT problem \eqref{eq:uot}, develop a new algorithm IBPUOT, and provide a convergence proof and convergence rate analysis of IBPUOT. In section \ref{sec:3.1}, we explained the design concept and specific form of IBPUOT. In section \ref{sec:3.2}, we provide the convergence analysis of IBPUOT. 

\subsection{IBPUOT}
\label{sec:3.1}
Recall the Bregman proximal point iteration \eqref{eq:bppa}, 
since problem \eqref{eq:uot} is convex and $\{\mathbf{P}:\mathbf{P}\geqslant0\}$ is a closed convex set, we take $f(\mathbf{P})=\langle\bf {C}, \bf{P}\rangle+
\lambda_{1} \mathrm{KL}(\bf{P}\bf{1}_{m}\mid \bf{a})+
\lambda_{2} \mathrm{KL}(\bf{P}^{T}\bf{1}_{n} \mid \bf{b})$, $\overline{\mathcal{X}}=\{\mathbf{P}:\mathbf{P}\geqslant0\}$, and d is the Bregman divergence $D_{h}$ with respect to the entropy function $h(\mathbf{x})=\mathbf{x}_{i}(\log\mathbf{x}_{i}-1)$. 
Therefore, $D_{h}$ has the following form,
\begin{equation}
	D_{h}(\mathbf{x}, \mathbf{y})=\sum_{i} \mathbf{x}_{i} \log \frac{\mathbf{x}_{i}}{\mathbf{y}_{i}}-\sum_{i} \mathbf{x}_{i}+\sum_{i} \mathbf{y}_{i}.
	\label{eq:bregd}
\end{equation}

As a result, problem \eqref{eq:uot} is solved by the following Bregman proximal point iterations,
\begin{equation}
	\mathbf{P}^{k+1} = \arg \min_{\mathbf{P}\geqslant 0}
	\langle\bf {C}, \bf{P}\rangle+
	\lambda_{1} \mathrm{KL}(\bf{P}\bf{1}_{m}\mid \bf{a})+
	\lambda_{2} \mathrm{KL}(\bf{P}^{T}\bf{1}_{n} \mid \bf{b})+
	\beta_{k}\mathit{D}_{\mathit{h}}({\mathbf{P},\mathbf{P}^{k}}).
	\label{breguot}
\end{equation}

Substitute Bregman divergence \eqref{eq:bregd} into iteration \eqref{breguot}, then iteration \eqref{breguot} can be rewritten as
\begin{equation}
	\mathbf{P}^{k+1}=\arg \min_{\mathbf{P}\geqslant 0}\langle\bf{C} -\beta_{k}\log\mathbf{P}^{k}, \bf{P}\rangle+
	\lambda_{1} \mathrm{KL}(\bf{P}\bf{1}_{m}\mid \bf{a})+
	\lambda_{2} \mathrm{KL}(\mathbf{P}^{T}\bf{1}_{n} \mid \bf{b})+
	\beta_{k} \mathit{h}(\bf{P}).
	\label{eq:beuot1}
\end{equation}

Let $\mathbf{C}^{k}=\mathbf{C} -\beta_{k}\log\mathbf{P}^{k}$, we can get
\begin{equation}
	\mathbf{P}^{k+1}=\arg \min_{\mathbf{P}\geqslant 0}\langle\mathbf{C}^{k}, \bf{P}\rangle+
	\lambda_{1} \mathrm{KL}(\bf{P}\bf{1}_{m}\mid \bf{a})+
	\lambda_{2} \mathrm{KL}(\bf{P}^{T}\bf{1}_{n} \mid \bf{b})+
	\beta_{k} \mathit{h}(\bf{P}).
	\label{eq:beuot2}
\end{equation}

As we can see, problem \eqref{eq:beuot2} is a entropy regularized UOT problem, Similar to \eqref{eq:euot}, problem \eqref{eq:beuot2} can also be approximated using the scaling algorithm. we should replace $\mathbf{K}_{i,j}$ by $\mathbf{K}_{i,j}^{k}=e^{-\mathbf{C}_{i j}^{k} /\beta_{k}} =\mathbf{P}_{i j}^{k} e^{-\mathbf{C}_{i j} / \beta_{k}}$.

In fact, within the framework of the inexact Bregman proximal point method, we only need to use the Scaling algorithm to approximately solve the problem \eqref{eq:beuot2} and generate a sequence $\{\mathbf{P}^{k}\}$ that satisfies the following inexact conditions in order to make the entire algorithm converge.
\begin{equation}
		0 \in \partial_{\nu_{k}}f(\bf{P}^{k+1})+\beta_{k}(\nabla \mathit{h}(\mathbf{P}^{k+1})- \nabla \mathit{h}(\mathbf{P}^{k})).
		\label{eq:terminate}
\end{equation}
The complete algorithm is presented as Algorithm \ref{alg:IBPUOT}, and for simplicity, we can take $\beta_{k}$ as a constant, and let $\beta_{k}=\beta$.

\begin{algorithm}[!h]
	\caption{IBPUOT($\mathbf{a},\mathbf{b},\lambda_{1},\lambda_{2},\mathbf{C}$)}
	\label{alg:IBPUOT}
	\renewcommand{\algorithmicrequire}{\textbf{Input:}}
	\renewcommand{\algorithmicensure}{\textbf{Output:}}
	\begin{algorithmic}
		\REQUIRE positive vectors $\mathbf{a}$, $\mathbf{b}$, unbalancedness parameters $\lambda_{1}$,$\lambda_{2}$ and cost matrix $\mathbf{C}$
		\ENSURE $\mathbf{P}^{k}$
		\STATE $\bf{v}=\bf{1}_{m}$
		\STATE $\mathbf{K}=e^{-\mathbf{C}/\beta}$
		\STATE $\mathbf{P}^{0}=\mathbf{1}_{n}\mathbf{1}_{m}^{T}$
		\FOR{k=0,1,3,\dots,N}
			\STATE $\mathbf{G}=\mathbf{K}\odot \mathbf{P}^{k}$
			\WHILE{termination criterion \eqref{eq:terminate} is not met}
				\STATE $\bf{u}=(\frac{\bf{a}}{\bf{K}\bf{v}})^{\frac{\lambda_{1}}{\lambda_{1}+\beta}}$
				\STATE $\bf{v}=(\frac{\bf{b}}{\bf{K}^{T} \bf{u}})^{\frac{\lambda_{2}}{\lambda_{2}+\beta}}$
			\ENDWHILE
			\STATE $\mathbf{P}^{k+1}=Diag(\bf{u})\mathbf{G} \mathit{Diag}(\bf{v})$
		\ENDFOR
		\RETURN	$\mathbf{P}^{k+1}$
	\end{algorithmic}		
\end{algorithm}

\subsection{Convergence analysis of IBPUOT}
\label{sec:3.2}
Next, we will establish the convergence of Algorithm \ref{alg:IBPUOT} IBPUOT. Our proof is inspired by some related work \cite{xie2020fast,yang2022bregman}. First, for convenience, we will use $f(\mathbf{P})$ to represent problem \eqref{eq:uot} and $h$ to present the entropy function. And then we provide a sufficient descent property through the following lemma.

\begin{lem}[sufficient descent property]
	Let $\{\mathbf{P}^{k}\}$ be the sequence generated by the IBPUOT in Algorithm \ref{alg:IBPUOT}, and for any $\mathbf{P} \in dom f$ and $\beta_{k}>0$, we have:
	\begin{equation}
			f(\mathbf{P}^{k+1}) \leqslant f(\mathbf{P})+\beta_{k}\left(D_{h}(\mathbf{P}, \mathbf{P}^{k})-D_{h}(\mathbf{P}, \mathbf{P}^{k+1})-D_{h}(\mathbf{P}^{k+1}, \mathbf{P}^{k})\right) +\nu_{k}.
			\label{eq:descent}
	\end{equation}
	\label{lem:descent}
\end{lem}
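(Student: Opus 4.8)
The plan is to exploit the inexact optimality condition \eqref{eq:terminate}, which says that $0 \in \partial_{\nu_k} f(\mathbf{P}^{k+1}) + \beta_k(\nabla h(\mathbf{P}^{k+1}) - \nabla h(\mathbf{P}^k))$. Equivalently, setting $\mathbf{d}_k := -\beta_k(\nabla h(\mathbf{P}^{k+1}) - \nabla h(\mathbf{P}^k))$, we have $\mathbf{d}_k \in \partial_{\nu_k} f(\mathbf{P}^{k+1})$. First I would unwind the definition of the $\nu$-subdifferential at $\mathbf{P}^{k+1}$: for every $\mathbf{P} \in \mathrm{dom}\, f$,
\begin{equation*}
	f(\mathbf{P}) \geqslant f(\mathbf{P}^{k+1}) + \langle \mathbf{d}_k, \mathbf{P} - \mathbf{P}^{k+1}\rangle - \nu_k.
\end{equation*}
Rearranging gives $f(\mathbf{P}^{k+1}) \leqslant f(\mathbf{P}) - \langle \mathbf{d}_k, \mathbf{P} - \mathbf{P}^{k+1}\rangle + \nu_k = f(\mathbf{P}) + \beta_k \langle \nabla h(\mathbf{P}^{k+1}) - \nabla h(\mathbf{P}^k), \mathbf{P} - \mathbf{P}^{k+1}\rangle + \nu_k$.

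The second step is to recognize the inner product term as exactly the right-hand side of the three points identity \eqref{three point}. Applying that lemma with the substitution $\mathbf{x} = \mathbf{P}$, $\mathbf{y} = \mathbf{P}^{k+1}$, $\mathbf{z} = \mathbf{P}^k$ yields
\begin{equation*}
	\langle \nabla h(\mathbf{P}^{k+1}) - \nabla h(\mathbf{P}^k), \mathbf{P} - \mathbf{P}^{k+1}\rangle = D_h(\mathbf{P}, \mathbf{P}^k) - D_h(\mathbf{P}, \mathbf{P}^{k+1}) - D_h(\mathbf{P}^{k+1}, \mathbf{P}^k).
\end{equation*}
Substituting this into the inequality from the first step produces \eqref{eq:descent} directly. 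So the proof is essentially a two-line chain: inexact subdifferential inequality, then three points identity.

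The main obstacle — more a matter of care than of difficulty — is checking the hypotheses needed to invoke the three points identity and the $\nu$-subdifferential definition: namely that $h$ (the Boltzmann--Shannon entropy) is differentiable at $\mathbf{P}^{k+1}$ and $\mathbf{P}^k$, that $h(\mathbf{P}), h(\mathbf{P}^{k+1}), h(\mathbf{P}^k)$ are finite, and that $\mathbf{P}^{k+1}, \mathbf{P}^k$ lie in the interior of $\mathrm{dom}\, h$ (i.e. are strictly positive). The latter should follow from the Scaling-algorithm update $\mathbf{P}^{k+1} = \mathrm{Diag}(\mathbf{u})\,\mathbf{G}\,\mathrm{Diag}(\mathbf{v})$ with $\mathbf{G} = \mathbf{K} \odot \mathbf{P}^k$ and the initialization $\mathbf{P}^0 = \mathbf{1}_n\mathbf{1}_m^T > 0$, since $\mathbf{K} > 0$ and the scaling vectors are positive, so positivity is preserved inductively; I would state this as a brief remark. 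One should also confirm that the case $\mathbf{P} \notin \mathrm{dom}\, f$ is vacuous (the inequality holds trivially since $f(\mathbf{P}) = +\infty$), and that $\beta_k > 0$ is used only to pass the positive scalar in and out of the inner product. No delicate estimates are required; the content is entirely in correctly identifying the inexact optimality condition with the stated form.
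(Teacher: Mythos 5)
Your proposal is correct and follows exactly the same two-step route as the paper's proof: extract the $\nu$-subdifferential inequality from the termination condition \eqref{eq:terminate}, then apply the three points identity \eqref{three point} with the substitution $\mathbf{x}=\mathbf{P}$, $\mathbf{y}=\mathbf{P}^{k+1}$, $\mathbf{z}=\mathbf{P}^{k}$. The additional remarks you make about verifying the positivity of the iterates (so that $h$ is differentiable there and the three points identity applies) are legitimate hygiene that the paper silently elides.
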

\begin{proof}
	according to condition \eqref{eq:terminate}, we know that there exists a $\mathbf{d}^{k+1}\in\partial_{\nu_{k}}f(\mathbf{P}^{k+1})$ that satisfies $0=\mathbf{d}^{k+1}+\beta_{k}(\nabla \mathit{h}(\mathbf{P}^{k+1})- \nabla \mathit{h}(\mathbf{P}^{k}))$. therefor, for any $\mathbf{P} \in dom f$, from the definition of the $\nu$-subdifferential of $f$, we can know
	\begin{equation*}
		\begin{aligned}
			f(\mathbf{P}) &\geqslant f(\mathbf{P}^{k+1})+\langle\mathbf{d}^{k+1},\mathbf{P}-\mathbf{P}^{k+1} \rangle -\nu_{k} \\
			&=f(\mathbf{P}^{k+1})+\langle -\beta_{k}(\nabla \mathit{h}(\mathbf{P}^{k+1})- \nabla \mathit{h}(\mathbf{P}^{k})),\mathbf{P}-\mathbf{P}^{k+1} \rangle -\nu_{k}.
		\end{aligned}
	\end{equation*}
	So we can obtain
	\begin{equation*}
		f(\mathbf{P}^{k+1}) \leqslant f(\mathbf{P})+\beta_{k}\langle\nabla \mathit{h}(\mathbf{P}^{k+1})- \nabla \mathit{h}(\mathbf{P}^{k}),\mathbf{P}-\mathbf{P}^{k+1} \rangle +\nu_{k}.
	\end{equation*}
	According to the three points identity \eqref{three point}, we can get
	\begin{equation*}
		f(\mathbf{P}^{k+1}) \leqslant f(\mathbf{P})+\beta_{k}\left(D_{h}(\mathbf{P}, \mathbf{P}^{k})-D_{h}(\mathbf{P}, \mathbf{P}^{k+1})-D_{h}(\mathbf{P}^{k+1}, \mathbf{P}^{k})\right) +\nu_{k}.
	\end{equation*}
\end{proof}

On the basis of Lemma \ref{lem:descent}, we can further prove the convergence of the Algorithm \ref{alg:IBPUOT} IBPUOT.

\begin{thm}
Let $\{\mathbf{P}^{k}\}$ be the sequence generated by the IBPUOT in Algorithm \ref{alg:IBPUOT}, for any $\mathbf{P}^{*}$, which is the optimal solution of problem \eqref{eq:uot}, i.e.,$f(\mathbf{P})$, we have
\begin{equation}
	f(\mathbf{P}^{N})-f(\mathbf{P}^{*})\leqslant\tau_{N-1}^{-1}\left( D_{h}(\mathbf{P}^{*},\mathbf{P}^{0})+\sum_{k=0}^{N-1}\beta_{k}^{-1}\nu_{k}+\sum_{k=0}^{N-1}\tau_{k-1}\eta_{k}\right),
	\label{eq:thm3.2}
\end{equation}
where $\tau_{-1}:=0$, $\tau_{k}:=\sum_{t=0}^{k}\beta_{t}^{-1}$, and $\eta_{k}:=f(\mathbf{P}^{k+1})-f(\mathbf{P}^{k})\leqslant \nu_{k}$ for any integer $k \geqslant 0$. In addition, if $\sup_{k}\{\beta_{k}\}<\infty$ and $\sum_{k}\nu_{k}<\infty$, then $f(\mathbf{P}^{k})\rightarrow f^{*}:=f(\mathbf{P}^{*})$.
\end{thm}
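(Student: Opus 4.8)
The plan is to telescope the sufficient descent inequality from Lemma~\ref{lem:descent} after a suitable reweighting, then invoke Lemma~\ref{lem:sequence} to upgrade the resulting rate statement into a convergence statement. First I would apply \eqref{eq:descent} with $\mathbf{P}=\mathbf{P}^{*}$, divide through by $\beta_{k}$, and rearrange to obtain
\[
\beta_{k}^{-1}\bigl(f(\mathbf{P}^{k+1})-f(\mathbf{P}^{*})\bigr)\leqslant D_{h}(\mathbf{P}^{*},\mathbf{P}^{k})-D_{h}(\mathbf{P}^{*},\mathbf{P}^{k+1})-D_{h}(\mathbf{P}^{k+1},\mathbf{P}^{k})+\beta_{k}^{-1}\nu_{k}.
\]
Summing over $k=0,\dots,N-1$ makes the $D_{h}(\mathbf{P}^{*},\cdot)$ terms telescope, leaving $D_{h}(\mathbf{P}^{*},\mathbf{P}^{0})-D_{h}(\mathbf{P}^{*},\mathbf{P}^{N})\leqslant D_{h}(\mathbf{P}^{*},\mathbf{P}^{0})$ plus $\sum\beta_{k}^{-1}\nu_{k}$ and a nonpositive contribution $-\sum D_{h}(\mathbf{P}^{k+1},\mathbf{P}^{k})$.

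The issue is that the left-hand side is $\sum_{k}\beta_{k}^{-1}\bigl(f(\mathbf{P}^{k+1})-f(\mathbf{P}^{*})\bigr)$, a weighted sum of function-value gaps at different iterates, rather than the single gap $f(\mathbf{P}^{N})-f(\mathbf{P}^{*})$ we want. This is where the monotonicity-type quantity $\eta_{k}:=f(\mathbf{P}^{k+1})-f(\mathbf{P}^{k})$ enters: I would first establish $\eta_{k}\leqslant\nu_{k}$ by taking $\mathbf{P}=\mathbf{P}^{k}$ in \eqref{eq:descent} and using $D_{h}(\mathbf{P}^{k},\mathbf{P}^{k})=0$ together with nonnegativity of the remaining Bregman terms. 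Then, to convert the weighted average into the terminal value, I would use an Abel summation / summation-by-parts argument: writing $f(\mathbf{P}^{k+1})-f(\mathbf{P}^{*}) = \bigl(f(\mathbf{P}^{N})-f(\mathbf{P}^{*})\bigr) - \sum_{t=k+1}^{N-1}\eta_{t}$ and substituting, the coefficient bookkeeping produces exactly the claimed $\tau_{N-1}^{-1}\bigl(D_{h}(\mathbf{P}^{*},\mathbf{P}^{0})+\sum_{k=0}^{N-1}\beta_{k}^{-1}\nu_{k}+\sum_{k=0}^{N-1}\tau_{k-1}\eta_{k}\bigr)$, with the weights $\mu_{k}=\beta_{k}^{-1}$, $\tau_{k}=\sum_{t=0}^{k}\mu_{t}$, and the convention $\tau_{-1}=0$. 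I expect this rearrangement — getting the $\tau_{k-1}\eta_{k}$ terms to appear with the right indices — to be the main bookkeeping obstacle, though it is mechanical once the telescoping identity for $f(\mathbf{P}^{k+1})-f(\mathbf{P}^{*})$ is written out.

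For the final convergence claim, I would set $\beta_{k}:=f(\mathbf{P}^{k})-f(\mathbf{P}^{*})\geqslant 0$ (abusing notation relative to the proximal parameter; in the writeup I would use a fresh symbol) and apply Lemma~\ref{lem:sequence} with $\mu_{k}=\beta_{k}^{-1}$ to the averaged sequence. Under $\sup_{k}\beta_{k}<\infty$ we have $\tau_{N-1}\to\infty$, so $\tau_{N-1}^{-1}D_{h}(\mathbf{P}^{*},\mathbf{P}^{0})\to 0$. The term $\tau_{N-1}^{-1}\sum_{k=0}^{N-1}\beta_{k}^{-1}\nu_{k}\to 0$ by Lemma~\ref{lem:sequence}(1) applied to $\beta_{k}=\nu_{k}\to 0$ (which holds since $\sum_k\nu_k<\infty$). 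For the last term, from $\eta_{k}\leqslant\nu_{k}$ and $\sum_k\nu_k<\infty$ one controls $\sum_k\tau_{k-1}\eta_{k}$ after noting $\tau_{k-1}\leqslant (k)\sup_j\beta_j$ grows only linearly while the positive part of $\eta_k$ is summable and $f(\mathbf{P}^k)$ is bounded below; dividing by $\tau_{N-1}=\Theta(N)$ kills it. Combining, $\limsup_N\bigl(f(\mathbf{P}^{N})-f(\mathbf{P}^{*})\bigr)\leqslant 0$, and since $f(\mathbf{P}^{N})\geqslant f(\mathbf{P}^{*})$ by optimality, $f(\mathbf{P}^{k})\to f^{*}$. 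The delicate point I would be most careful about is that $\eta_k$ can be positive, so the term $\sum_k\tau_{k-1}\eta_{k}$ is not obviously negligible; the summability $\sum_k\nu_k<\infty$ combined with the lower bound on $f$ is exactly what is needed to tame it.
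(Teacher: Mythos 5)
Your derivation of the rate bound \eqref{eq:thm3.2} is mathematically the same as the paper's, just organized differently: you telescope the Bregman terms after dividing by $\beta_k$, then rewrite each gap $f(\mathbf{P}^{k+1})-f(\mathbf{P}^{*})$ as $(f(\mathbf{P}^{N})-f(\mathbf{P}^{*}))-\sum_{t=k+1}^{N-1}\eta_t$ and swap the double sum to produce $\sum_k\tau_{k-1}\eta_k$, whereas the paper writes the summation-by-parts identity $\beta_k^{-1}f(\mathbf{P}^{k+1})=\tau_k f(\mathbf{P}^{k+1})-\tau_{k-1}f(\mathbf{P}^{k})-\tau_{k-1}\eta_k$ directly and sums. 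Same bookkeeping, both correct.

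For the convergence claim your route diverges from the paper's. You push the final bound \eqref{eq:thm3.2} to the limit, which forces you to show $\tau_{N-1}^{-1}\sum_{k=0}^{N-1}\tau_{k-1}\eta_k$ is asymptotically nonpositive. That can be done --- split $\eta_k=\eta_k^{+}-\eta_k^{-}$, note $\eta_k^{+}\leqslant\nu_k$, and run a Cesàro-type split on $\tau_{N-1}^{-1}\sum_k\tau_{k-1}\nu_k$ (fix $M$, bound the tail by $\sum_{k>M}\nu_k$, let the head die since $\tau_{N-1}\to\infty$); the negative part only helps since the target is an upper bound on $f(\mathbf{P}^{N})-f(\mathbf{P}^{*})\geqslant0$. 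But your sketch of this step leans on two assertions that aren't granted: $\tau_{k-1}\leqslant k\sup_j\beta_j$ should be $k\sup_j\beta_j^{-1}$, which needs $\inf_j\beta_j>0$ (not assumed), and likewise $\tau_{N-1}=\Theta(N)$ is only $\Omega(N)$ under $\sup_k\beta_k<\infty$. Neither is actually needed once you use the Cesàro argument, but as written the bound on the third term has a gap. The paper sidesteps all of this: it returns to the intermediate inequality obtained before introducing the $\tau_{k-1}\eta_k$ correction, namely $\sum_{k=0}^{n}\beta_k^{-1}f(\mathbf{P}^{k+1})-\tau_n f(\mathbf{P}^{*})\leqslant D_h(\mathbf{P}^{*},\mathbf{P}^{0})+\sum_{k=0}^{n}\beta_k^{-1}\nu_k$, divides by $\tau_n$, and applies Lemma~\ref{lem:sequence} with weights $\mu_k=\beta_k^{-1}$ to get $\liminf_n f(\mathbf{P}^{n+1})\leqslant f(\mathbf{P}^{*})$, after which optimality of $\mathbf{P}^{*}$ closes the argument (with $\eta_k\leqslant\nu_k$ summable implicitly preventing oscillation). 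That route never touches $\sum_k\tau_{k-1}\eta_k$ and is cleaner; on the other hand you were right to flag the positive-part-of-$\eta_k$ issue as the delicate point, which the paper passes over somewhat lightly.
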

\begin{proof}
	For the \eqref{eq:descent} in the lemma \ref{lem:descent}, if we substitute the $\mathbf{P}$ to $\mathbf{P}^{k}$, we have
	\begin{equation*}
	f(\mathbf{P}^{k+1}) \leqslant f(\mathbf{P}^{k})-\beta_{k}\left(D_{h}(\mathbf{P}^{k}, \mathbf{P}^{k+1})+D_{h}(\mathbf{P}^{k+1}, \mathbf{P}^{k})\right) +\nu_{k}.
	\end{equation*}
	Therefore, for any $\eta_{k}$, there exists
	\begin{equation*}
		\begin{aligned}
			\eta_{k}&:=f(\mathbf{P}^{k+1})-f(\mathbf{P}^{k})\\
			&\leqslant -\beta_{k}\left(D_{h}(\mathbf{P}^{k}, \mathbf{P}^{k+1})+D_{h}(\mathbf{P}^{k+1}, \mathbf{P}^{k})\right) +\nu_{k} \\
			&\leqslant \nu_{k}.
		\end{aligned}
	\end{equation*}
	Because of $\tau_{-1}:=0$, $\tau_{k}:=\sum_{t=0}^{k}\beta_{t}^{-1}=\tau_{k-1}+\beta_{k}^{-1}$, for any $k \geqslant 0$, we get
	\begin{equation*}
		(\tau_{k}-\beta_{k}^{-1})f(\mathbf{P}^{k+1})=\tau_{k-1}f(\mathbf{P}^{k})+\tau_{k-1}\eta_{k}.
	\end{equation*}
	 Furthermore, we can obtain 
	\begin{equation*}
		\beta_{k}^{-1}f(\mathbf{P}^{k+1})=\tau_{k}f(\mathbf{P}^{k+1})-\tau_{k-1}f(\mathbf{P}^{k})-\tau_{k-1}\eta_{k}.
	\end{equation*}
	Summing the above expression from $k=0 $ to $k=N-1 $, we have
	\begin{equation}
		\sum_{k=0}^{N-1} \beta_{k}^{-1}f(\mathbf{P}^{k+1})=\tau_{N-1}f(\mathbf{P}^{N})
		-\sum_{k=0}^{N-1}\tau_{k-1}\eta_{k}.
		\label{eq:thm3.2.1}
	\end{equation}
	Assume $\mathbf{P}^{*}$ is an optimal solution of $f(\mathbf{P})$, we let $\mathbf{P}=\mathbf{P}^{*}$ in the \eqref{eq:descent} to obtain
	\begin{equation*}
	\begin{aligned}
				f(\mathbf{P}^{k+1})-f(\mathbf{P}^{*})&\leqslant \beta_{k}\left(D_{h}(\mathbf{P}^{*}, \mathbf{P}^{k})-D_{h}(\mathbf{P}^{*}, \mathbf{P}^{k+1})-D_{h}(\mathbf{P}^{k+1}, \mathbf{P}^{k})\right) +\nu_{k}\\
				&\leqslant\beta_{k}\left(D_{h}(\mathbf{P}^{*}, \mathbf{P}^{k})-D_{h}(\mathbf{P}^{*}, \mathbf{P}^{k+1}) \right)+\nu_{k}.
	\end{aligned}
	\end{equation*}
	So we can get
	\begin{equation*}
		\beta_{k}^{-1}f(\mathbf{P}^{k+1})-\beta_{k}^{-1}f(\mathbf{P}^{k})
		\leqslant D_{h}(\mathbf{P}^{*}, \mathbf{P}^{k})-D_{h}(\mathbf{P}^{*}, \mathbf{P}^{k+1})+\beta_{k}^{-1}\nu_{k}.
	\end{equation*}
		Summing the above expression from $k=0 $ to $k=N-1 $, we have
	\begin{equation}
		\begin{aligned}
			\sum_{k=0}^{N-1}\beta_{k}^{-1}f(\mathbf{P}^{k+1})-\tau_{N-1}f(\mathbf{P}^{*})&\leqslant D_{h}(\mathbf{P}^{*}, \mathbf{P}^{0})-D_{h}(\mathbf{P}^{*}, \mathbf{P}^{N})+\sum_{k=0}^{N-1}\beta_{k}^{-1}\nu_{k}\\
			&\leqslant D_{h}(\mathbf{P}^{*}, \mathbf{P}^{0})+\sum_{k=0}^{N-1}\beta_{k}^{-1}\nu_{k}.
		\end{aligned}
		\label{eq:thm3.2.2}
	\end{equation}
	We can combine \eqref{eq:thm3.2.1} and \eqref{eq:thm3.2.2} to get
	\begin{equation*}
		\tau_{N-1}\left(f(\mathbf{P}^{N})-f(\mathbf{P}^{*}) \right)\leqslant D_{h}(\mathbf{P}^{*}, \mathbf{P}^{0})+\sum_{k=0}^{N-1}\beta_{k}^{-1}\nu_{k}+\sum_{k=0}^{N-1}\tau_{k-1}\eta_{k}.
	\end{equation*}
	And then we Divide the above inequality by $\tau_{N-1}$, we can get
	\begin{equation*}
			f(\mathbf{P}^{N})-f(\mathbf{P}^{*})\leqslant\tau_{N-1}^{-1}\left( D_{h}(\mathbf{P}^{*},\mathbf{P}^{0})+\sum_{k=0}^{N-1}\beta_{k}^{-1}\nu_{k}+\sum_{k=0}^{N-1}\tau_{k-1}\eta_{k}\right).
	\end{equation*}
	
	Now we have completed the proof of \eqref{eq:thm3.2}. Next, we are going to prove that if $\sup_{k}\{\beta_{k}\}<\infty$ and $\sum\nu_{k}<\infty$, then $f(\mathbf{P}^{k})\rightarrow f^{*}:=f(\mathbf{P}^{*})$. 
	
	For any non-negative integer $n$, according to \eqref{eq:thm3.2.2}, we obtain that
	\begin{equation}
		\tau_{n}^{-1}\sum_{k=0}^{n}\beta_{k}^{-1}f(\mathbf{P}^{k+1})\leqslant f(\mathbf{P}^{*})+\tau_{n}^{-1}D_{h}(\mathbf{P}^{*}, \mathbf{P}^{0})+\tau_{n}^{-1}\sum_{k=0}^{n}\beta_{k}^{-1}\nu_{k},
	\end{equation}
Because $\sup_{k}\{\beta_{k}\}<\infty$, $\tau_{n}\rightarrow +\infty$ when $n\rightarrow +\infty$. And $\nu_{k}\rightarrow0$ when $k\rightarrow+\infty$ since $\sum\nu_{k}<\infty$. Thus, according to the Lemma \ref{lem:sequence}, we obtain that $\tau_{n}^{-1}\sum_{k=0}^{n}\beta_{k}^{-1}\nu_{k}\rightarrow0$ when $n\rightarrow +\infty$, and the following inequality
\begin{equation*}
	\liminf_{n \rightarrow+\infty}f(\mathbf{P}^{n+1}) \leqslant \liminf_{n \rightarrow+\infty}\tau_{n}^{-1}\sum_{k=0}^{n}\beta_{k}^{-1}f(\mathbf{P}^{k+1}) \leqslant f(\mathbf{P}^{*}).
\end{equation*}
And because $f(\mathbf{P}^{*}):=\min f(\mathbf{P})\leqslant f(\mathbf{P}^{n+1})$, so we can get that $\liminf_{n \rightarrow+\infty}f(\mathbf{P}^{n+1})=f(\mathbf{P}^{*})$.
Therefore, $f(\mathbf{P}^{k})\rightarrow f^{*}:=f(\mathbf{P}^{*})$
	
\end{proof}

Finally, we discuss the convergence rate of IBPUOT. According to \eqref{eq:thm3.2}, if the summable-error condition are finite,.i.e, $\sum_{k=0}^{N-1}\beta_{k}^{-1}\nu_{k}<\infty,\sum_{k=0}^{N-1}\tau_{k-1}\eta_{k}<\infty$.Then the convergence rate of IBPUOT is controlled by $\tau_{N-1}^{-1}:=(\sum_{t=0}^{N-1}\beta_{t}^{-1})^{-1}$. If we choose $\beta_{k}$ as a constant $\beta$, then the convergence rate of algorithm IBPUOT is $\mathcal{O}(\frac{1}{N})$.

\section{An acceleration version of the IBPUOT}
\label{sec:4}
In this section, we will develop an accelerated version of IBPUOT, denoted as AIBPUOT. In fact, research on accelerating the proximal point algorithm has a long history. Currently, most acceleration strategies for first-order optimization algorithms originate from Nesterov's momentum mechanism \cite{nesterov1988approach}, which constructs an estimate sequence to generate a clever momentum and then uses this momentum to generate an intermediate point for acceleration of iteration. The Bregman extension version of this acceleration method has also been studied in recent years \cite{yang2022bregman,hanzely2021accelerated}.

Inspired by their work, we developed an accelerated version of IBPUOT. In section \ref{sec:4.1}, we introduce the construction of AIBPUOT and provide the algorithmic form. In section \ref{sec:4.2}, we prove the convergence of AIBPUOT and explain the method of setting parameters to achieve acceleration effects.

\subsection{AIBPUOT}
\label{sec:4.1}
In order to develop AIBPUOT, we first need to construct an estimated sequence $\{\phi_{k}(\mathbf{P})\}_{k=0}^{\infty}$, the specific form of which is as follows,
\begin{equation}
	\begin{aligned}
		\phi_{0}(\mathbf{P})&=f(\mathbf{P}^{0})+\sigma D_{h}(\mathbf{P},\mathbf{P}^{0}),\\
		\phi_{k+1}(\mathbf{P})&=(1-\theta_{k})\phi_{k}(\mathbf{P})+\theta_{k}\left(f(\mathbf{P}^{k+1})+\beta_{k}\langle\nabla \mathit{h}(\mathbf{Y}^{k})- \nabla \mathit{h}(\mathbf{P}^{k+1}),\mathbf{P}-\mathbf{P}^{k+1} \rangle -\nu_{k} \right).
	\end{aligned}
	\label{eq:estimate4}
\end{equation}
Where $\sigma$ and $\beta_{k}$ are positive real numbers, $\theta_{k}$ is a real number in $[0,1]$, $\nu_{k}$ is a non-negative real number. According to the estimated sequence of functions $\{\phi_{k}(\mathbf{P})\}_{k=0}^{\infty} $, we construct the Algorithm \ref{alg:AIBPUOT} AIBPUOT.

Firstly, we calculate $\mathbf{Z}^{k}$, which is the minimum point of function $\phi_{k}(\mathbf{P})$ in the estimated sequence, i.e., $\mathbf{Z}^{k}=\arg \min_{\mathbf{P}}\{\phi_{k}(\mathbf{P}) \}$. And then we need to construct an intermediate point $\mathbf{Y}^{k}$ by the following equation
\begin{equation*}
	\mathbf{Y}^{k}=\theta_{k}\mathbf{Z}^{k}+(1-\theta_{k})\mathbf{P}^{k},
\end{equation*}
where $\theta_{k}$ is specified by \eqref{eq:choosetheta}, then we use the $\mathbf{Y}^{k}$ to generate $\mathbf{P}^{k+1}$,
\begin{equation}
	\mathbf{P}^{k+1} = \arg \min_{\mathbf{P}\geqslant 0}
	\langle\bf {C}, \bf{P}\rangle+
	\lambda_{1} \mathrm{KL}(\bf{P}\bf{1}_{m}\mid \bf{a})+
	\lambda_{2} \mathrm{KL}(\bf{P}^{T}\bf{1}_{n} \mid \bf{b})+
	\beta_{k}\mathit{D}_{\mathit{h}}({\mathbf{P},\mathbf{Y}^{k}}).
	\label{eq:abreguot}
\end{equation}
Similar to \eqref{breguot} and \eqref{eq:beuot1}, the above equation can be rewritten in the following form
\begin{equation}
		\mathbf{P}^{k+1}=\arg \min_{\mathbf{P}\geqslant 0}\langle\mathbf{C}^{k}, \bf{P}\rangle+
	\lambda_{1} \mathrm{KL}(\bf{P}\bf{1}_{m}\mid \bf{a})+
	\lambda_{2} \mathrm{KL}(\bf{P}^{T}\bf{1}_{n} \mid \bf{b})+
	\beta_{k} \mathit{h}(\bf{P}),
	\label{eq:abeuot}
\end{equation}
where $\mathbf{C}^{k}=\mathbf{C} -\beta_{k}\log\mathbf{Y}^{k}$. This is an entropy regularized UOT, it can also be solved by \eqref{eq:scalg}. Here, we still adopt an inexact solution of \eqref{eq:abeuot}, as long as $\mathbf{P}^{k+1}$ can meet the following conditions.
\begin{equation}
		0 \in \partial_{\nu_{k}}f(\bf{P}^{k+1})+\beta_{k}(\nabla \mathit{h}(\mathbf{P}^{k+1})- \nabla \mathit{h}(\mathbf{Y}^{k})).
		\label{eq:abuot estinate}
\end{equation}
And then we compute new $\mathbf{Z}^{k+1}$ and iterate until the algorithm converges. The complete algorithm is presented as Algorithm \ref{alg:AIBPUOT}, and for simplicity, we can take $\beta_{k}$ as a constant, and let $\beta_{k}=\beta$. The specific selection method for parameters $\theta_{k}$ and $\mathbf{Z}^{k+1}$ in Algorithm \ref{alg:AIBPUOT} will be discussed in the subsequent convergence proof.

\begin{algorithm}[!h]
	\caption{AIBPUOT($\mathbf{a},\mathbf{b},\lambda_{1},\lambda_{2},\mathbf{C}$)}
	\label{alg:AIBPUOT}
	\renewcommand{\algorithmicrequire}{\textbf{Input:}}
	\renewcommand{\algorithmicensure}{\textbf{Output:}}
	\begin{algorithmic}
		\REQUIRE positive vectors $\mathbf{a}$, $\mathbf{b}$, unbalancedness parameters $\lambda_{1}$,$\lambda_{2}$ and cost matrix $\mathbf{C}$
		\ENSURE $\mathbf{P}^{k}$
		\STATE Let $\bf{v}=\bf{1}_{m}$ and $\mathbf{K}=e^{-\mathbf{C}/\beta}$
		\STATE $\mathbf{P}^{0}=\mathbf{Z}^{0}=\mathbf{1}_{n}\mathbf{1}_{m}^{T}$
		\FOR{k=0,1,3,\dots,N}
		\STATE choose $\theta_{k}\in[0,1)$ satisfying \eqref{eq:choosetheta}, set $\mathbf{Y}^{k}=\theta_{k}\mathbf{Z}^{k}+(1-\theta_{k})\mathbf{P}^{k}$
		\STATE $\mathbf{G}=\mathbf{K}\odot \mathbf{Y}^{k}$
		\WHILE{termination criterion \eqref{eq:abuot estinate} is not met}
		\STATE $\bf{u}=(\frac{\bf{a}}{\bf{K}\bf{v}})^{\frac{\lambda_{1}}{\lambda_{1}+\beta}}$
		\STATE $\bf{v}=(\frac{\bf{b}}{\bf{K}^{T} \bf{u}})^{\frac{\lambda_{2}}{\lambda_{2}+\beta}}$
		\ENDWHILE
		\STATE $\mathbf{P}^{k+1}=Diag(\bf{u})\mathbf{G} \mathit{Diag}(\bf{v})$
		\STATE Set $\phi_{k+1}(\mathbf{P})$ by \eqref{eq:estimate4}, compute $\mathbf{Z}^{k+1}=\arg \min_{P}\{\phi_{k+1}(\mathbf{P})\}$ by \eqref{eq:choose z}
		\ENDFOR
		\RETURN	$\mathbf{P}^{k+1}$
	\end{algorithmic}		
\end{algorithm}

\subsection{Convergence analysis of AIBPUOT}
\label{sec:4.2}
In this subsection, we will study the convergence property of AIBPUOT, as we generate an intermediate point according to Nesterov's idea to serve as the proximal point in subproblem \ref{eq:abreguot} and ultimately converge to the optimal solution of the original problem \eqref{eq:uot}. Therefore, our convergence proof will also differ from that of IBPUOT. Like many other acceleration methods, our proof process is completed using Nesterov's estimate sequence technique.

Before proving, we first introduce the triangle scaling property (TSP) of Bregman distance.
\begin{defn}[triangle scaling property\cite{hanzely2021accelerated}]
	Let $h$ be a convex function that is differentiable on rint dom $h$. The 
	Bregman distance $D_{h}$ has the triangle scaling property if there are constant $\gamma > 0 $ and $\theta\in[0,1]$ such that for all $\mathbf{x},\mathbf{y},\mathbf{z} \in$ rint dom $h$,
	\begin{equation}
		D_{h}((1-\theta)\mathbf{x}+\theta\mathbf{y},(1-\theta)\mathbf{x}+\theta\mathbf{z})\leqslant \tau\theta^{\gamma}D_{h}(\mathbf{y},\mathbf{z}).
		\label{eq:tsp}
	\end{equation}
	Here we call $\gamma$ triangle scaling exponent (TSE) of $D_{h}$, and $\tau$ is called the triangle scaling constants (TSC) of $D_{h}$.
\end{defn}

When $h$ is the entropy kernel function,.i.e,$h(\bf{x})=\sum_{i}\bf{x}_{i}(\log \bf{x}_{i}-1)$, then for any $\theta \in [0,1]$
\begin{equation*}
		D_{h}((1-\theta)\mathbf{x}+\theta\mathbf{y},(1-\theta)\mathbf{x}+\theta\mathbf{z})\leqslant \theta D_{h}(\mathbf{y},\mathbf{z}).
\end{equation*}
In this case, $\tau$=$\gamma$=1.

Now let's start with the convergence proof, first we estimate the difference between $\phi_{k}(\bf{P})$ and $f(\bf{P})$, giving the following lemma.

\begin{lem}
	For all $k\geqslant0$, the  estimate sequence of functions $\{\phi_{k}(\mathbf{P})\}_{k=0}^{\infty}$ be generated by \eqref{eq:estimate4}, we have
	\begin{equation}
		\phi_{k+1}(\mathbf{P})-f(\mathbf{P})\leqslant(1-\theta_{k})(\phi_{k}(\mathbf{P})-f(\mathbf{P})) \quad \forall \mathbf{P} \in dom f.
	\end{equation}
	\label{lem:phi-f}
\end{lem}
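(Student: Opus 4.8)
The plan is to prove the estimate by induction-free direct comparison, using only the recursive definition of $\phi_{k+1}$ in \eqref{eq:estimate4} and the inexact optimality condition \eqref{eq:abuot estinate}. The key observation is that the bracketed term in the definition of $\phi_{k+1}(\mathbf{P})$, namely
\[
f(\mathbf{P}^{k+1})+\beta_{k}\langle\nabla h(\mathbf{Y}^{k})-\nabla h(\mathbf{P}^{k+1}),\mathbf{P}-\mathbf{P}^{k+1}\rangle-\nu_{k},
\]
is precisely a lower bound for $f(\mathbf{P})$. Indeed, \eqref{eq:abuot estinate} says there is $\mathbf{d}^{k+1}\in\partial_{\nu_{k}}f(\mathbf{P}^{k+1})$ with $\mathbf{d}^{k+1}=-\beta_{k}(\nabla h(\mathbf{P}^{k+1})-\nabla h(\mathbf{Y}^{k}))=\beta_{k}(\nabla h(\mathbf{Y}^{k})-\nabla h(\mathbf{P}^{k+1}))$, so the definition of the $\nu_{k}$-subdifferential gives
\[
f(\mathbf{P})\geqslant f(\mathbf{P}^{k+1})+\langle\mathbf{d}^{k+1},\mathbf{P}-\mathbf{P}^{k+1}\rangle-\nu_{k}
= f(\mathbf{P}^{k+1})+\beta_{k}\langle\nabla h(\mathbf{Y}^{k})-\nabla h(\mathbf{P}^{k+1}),\mathbf{P}-\mathbf{P}^{k+1}\rangle-\nu_{k}
\]
for every $\mathbf{P}\in\operatorname{dom}f$. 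Call the right-hand side $\ell_{k}(\mathbf{P})$, so that $\ell_{k}(\mathbf{P})\leqslant f(\mathbf{P})$ and $\phi_{k+1}(\mathbf{P})=(1-\theta_{k})\phi_{k}(\mathbf{P})+\theta_{k}\ell_{k}(\mathbf{P})$.

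From here the computation is immediate: subtract $f(\mathbf{P})$ from both sides and write $f(\mathbf{P})=(1-\theta_{k})f(\mathbf{P})+\theta_{k}f(\mathbf{P})$, which gives
\[
\phi_{k+1}(\mathbf{P})-f(\mathbf{P})=(1-\theta_{k})(\phi_{k}(\mathbf{P})-f(\mathbf{P}))+\theta_{k}(\ell_{k}(\mathbf{P})-f(\mathbf{P})).
\]
Since $\theta_{k}\in[0,1]$ is nonnegative and $\ell_{k}(\mathbf{P})-f(\mathbf{P})\leqslant 0$ by the previous paragraph, the last term is $\leqslant 0$, and we obtain $\phi_{k+1}(\mathbf{P})-f(\mathbf{P})\leqslant(1-\theta_{k})(\phi_{k}(\mathbf{P})-f(\mathbf{P}))$, which is the claim. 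I would state this for arbitrary $\mathbf{P}\in\operatorname{dom}f$, which is all the lemma asks.

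There is essentially no hard step here; the only thing to be careful about is making sure the sign and orientation of the gradient difference in \eqref{eq:abuot estinate} matches the one appearing in \eqref{eq:estimate4} (they do: $-\beta_{k}(\nabla h(\mathbf{P}^{k+1})-\nabla h(\mathbf{Y}^{k}))=\beta_{k}(\nabla h(\mathbf{Y}^{k})-\nabla h(\mathbf{P}^{k+1}))$), and noting that $\mathbf{d}^{k+1}$ being a $\nu_{k}$-subgradient is exactly what is needed to get the inequality $\ell_{k}(\mathbf{P})\leqslant f(\mathbf{P})$ with the $-\nu_{k}$ slack already built into $\ell_{k}$. One should also implicitly assume $\mathbf{P}^{k+1}$ and $\mathbf{Y}^{k}$ lie in $\operatorname{int}\operatorname{dom}h$ so that $\nabla h$ is defined there, which holds because the Scaling iterates are strictly positive matrices. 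So the proof is a two-line argument once the lower-bound interpretation of the bracket is recorded.
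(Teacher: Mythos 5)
Your argument is correct and is essentially the same as the paper's: you identify the bracketed term in the recursion as the $\nu_{k}$-subgradient lower bound $\ell_{k}(\mathbf{P})\leqslant f(\mathbf{P})$ coming from \eqref{eq:abuot estinate}, then split $f(\mathbf{P})=(1-\theta_{k})f(\mathbf{P})+\theta_{k}f(\mathbf{P})$ and drop the nonpositive $\theta_{k}(\ell_{k}(\mathbf{P})-f(\mathbf{P}))$ term, which is exactly the paper's computation. The only (harmless) difference is presentational: you name the lower bound $\ell_{k}$, whereas the paper carries the expression explicitly.
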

\begin{proof}
	According to \eqref{eq:abuot estinate}, we know that there exists a $\mathbf{d}^{k+1}\in\partial_{\nu_{k}}f(\mathbf{P}^{k+1})$ that satisfies $0=\mathbf{d}^{k+1}+\beta_{k}(\nabla \mathit{h}(\mathbf{P}^{k+1})- \nabla \mathit{h}(\mathbf{Y}^{k}))$, therefore, for $ \forall \mathbf{P} \in dom f$, we have
	\begin{equation}
		\begin{aligned}
			f(\mathbf{P}) &\geqslant f(\mathbf{P}^{k+1})+\langle\mathbf{d}^{k+1},\mathbf{P}-\mathbf{P}^{k+1} \rangle -\nu_{k} \\
			&=f(\mathbf{P}^{k+1})+\langle -\beta_{k}(\nabla \mathit{h}(\mathbf{P}^{k+1})- \nabla \mathit{h}(\mathbf{P}^{k})),\mathbf{P}-\mathbf{P}^{k+1} \rangle -\nu_{k}\\
			&=f(\mathbf{P}^{k+1})+\beta_{k}\langle \nabla\mathit{h}(\mathbf{Y}^{k})- \nabla \mathit{h}(\mathbf{P}^{k+1}),\mathbf{P}-\mathbf{P}^{k+1} \rangle -\nu_{k}.
		\end{aligned}
		\label{eq:abptu}
	\end{equation}
	Combing the construction of $\phi_{k}(\mathbf{P})$ in the \eqref{eq:estimate4} and above inequality, we can obtain that
	\begin{equation}
		\begin{aligned}
			\phi_{k+1}(\mathbf{P})-f(\mathbf{P})&=(1-\theta_{k})\phi_{k}(\mathbf{P})\\
			&+\theta_{k}\left(f(\mathbf{P}^{k+1})+\beta_{k}\langle\nabla \mathit{h}(\mathbf{Y}^{k})- \nabla \mathit{h}(\mathbf{P}^{k+1}),\mathbf{P}-\mathbf{P}^{k+1} \rangle -\nu_{k}\right) -f(\mathbf{P})\\
			&=(1-\theta_{k})\left( \phi_{k}(\mathbf{P})-f(\mathbf{P})\right)\\
			&+\theta_{k}\left(f(\mathbf{P}^{k+1})+\beta_{k}\langle\nabla \mathit{h}(\mathbf{Y}^{k})- \nabla \mathit{h}(\mathbf{P}^{k+1}),\mathbf{P}-\mathbf{P}^{k+1} \rangle -\nu_{k}-f(\mathbf{P})\right)\\
			&\leqslant(1-\theta_{k})\left( \phi_{k}(\mathbf{P})-f(\mathbf{P})\right).
		\end{aligned}
	\end{equation}
\end{proof}

According to the above Lemma \ref{lem:phi-f}, it can be deduced that the difference
 $ \phi_{k}(\mathbf{P})-f(\mathbf{P})$ is reduced by a factor $(1-\theta_{k})$ at the $k$-th iteration. We can induce to obtain that
 \begin{equation}
 	\phi_{k}(\mathbf{P})-f(\mathbf{P})\leqslant\rho_k(\phi_{0}(\mathbf{P})-f(\mathbf{P})) \quad \forall \mathbf{P} \in dom f,
 	\label{eq:4.9}
 \end{equation}
 where $\rho_{0}=1$, $\rho_k:=\prod_{i=0}^{k-1}(1-\theta_{i})$ for $k\geqslant1$.

In order to further evaluate the decline of $f(\mathbf{P}^{k})-f(\mathbf{P})$, we also need to study the relationship between $f(\mathbf{P}^{k}) $ and $\phi_{k}^{*}=\phi_{k}(\mathbf{Z}^{k})=\min_{P}\phi_{k}(\mathbf{P})$. Therefore, we will now proceed with the analysis of the estimated sequences.

 In fact, the estimation sequence of functions $\{\phi_{k}(\mathbf{P})\}_{k=0}^{\infty}$ constructed by \eqref{eq:estimate4} can be rewritten in the following form,
 \begin{equation}
 	\phi_{k}(\mathbf{P})=H_{k}(\mathbf{P})+\sigma\rho_{k}D_{h}(\mathbf{P},\mathbf{P}^{0}),
 	\label{eq:affin}
 \end{equation}
 where $H_{k}(\cdot)$ is an affine function. 
 Now we present a lemma, through which we can obtain the relationship between $\phi_{k}(\mathbf{Z}^{k+1})$ and $\phi_{k}(\mathbf{Z}^{k})$.
 
 \begin{lem}
 For the estimation sequence of functions $\{\phi_{k}(\mathbf{P})\}_{k=0}^{\infty}$ constructed by \eqref{eq:estimate4}, and  for any $\mathbf{P} \in$ dom $h$, we have
 	\begin{equation}
	\phi_{k}(\mathbf{P})=\phi_{k}(\mathbf{Z}^{k})+\sigma\rho_{k}D_{h}(\mathbf{P},\mathbf{Z}^{k}), 
 	\end{equation}
 	where $\mathbf{Z}^{k}=\arg\min_{\mathbf{P}}\phi_{k}(\mathbf{P})$.
 	\label{lem:affin}
 \end{lem}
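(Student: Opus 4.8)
The plan is to prove the identity $\phi_k(\mathbf{P}) = \phi_k(\mathbf{Z}^k) + \sigma\rho_k D_h(\mathbf{P}, \mathbf{Z}^k)$ by induction on $k$, leveraging the structural decomposition \eqref{eq:affin}, namely that $\phi_k(\mathbf{P}) = H_k(\mathbf{P}) + \sigma\rho_k D_h(\mathbf{P},\mathbf{P}^0)$ with $H_k$ affine. First I would settle the base case $k=0$: here $\phi_0(\mathbf{P}) = f(\mathbf{P}^0) + \sigma D_h(\mathbf{P},\mathbf{P}^0)$, so $\mathbf{Z}^0 = \mathbf{P}^0$ (the Bregman distance is minimized at its second argument), $\rho_0 = 1$, and the claimed identity reduces to $\phi_0(\mathbf{P}) = f(\mathbf{P}^0) + \sigma D_h(\mathbf{P},\mathbf{P}^0)$, which is just the definition.

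For the inductive/structural step, I would work directly from \eqref{eq:affin}. Write $\phi_k(\mathbf{P}) = H_k(\mathbf{P}) + \sigma\rho_k D_h(\mathbf{P},\mathbf{P}^0)$. Since $\mathbf{Z}^k$ minimizes $\phi_k$ over $\mathbf{P}$ and $\phi_k$ is differentiable (as $H_k$ is affine and $D_h$ is differentiable in its first argument on $\mathrm{int}\,\mathrm{dom}\,h$), the first-order optimality condition gives $\nabla\phi_k(\mathbf{Z}^k) = 0$, i.e. $\nabla H_k(\mathbf{Z}^k) + \sigma\rho_k\nabla_1 D_h(\mathbf{Z}^k,\mathbf{P}^0) = 0$. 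Then for arbitrary $\mathbf{P}$ I would compute $\phi_k(\mathbf{P}) - \phi_k(\mathbf{Z}^k)$ by expanding both terms: the affine part contributes $\langle \nabla H_k, \mathbf{P} - \mathbf{Z}^k\rangle$, and the Bregman part contributes $\sigma\rho_k\big(D_h(\mathbf{P},\mathbf{P}^0) - D_h(\mathbf{Z}^k,\mathbf{P}^0)\big)$. Using the three-points identity \eqref{three point} with the triple $(\mathbf{P},\mathbf{Z}^k,\mathbf{P}^0)$, I get $D_h(\mathbf{P},\mathbf{P}^0) - D_h(\mathbf{P},\mathbf{Z}^k) - D_h(\mathbf{Z}^k,\mathbf{P}^0) = \langle \nabla h(\mathbf{Z}^k) - \nabla h(\mathbf{P}^0), \mathbf{P} - \mathbf{Z}^k\rangle$, which rearranges to express $D_h(\mathbf{P},\mathbf{P}^0) - D_h(\mathbf{Z}^k,\mathbf{P}^0)$ as $D_h(\mathbf{P},\mathbf{Z}^k) + \langle\nabla h(\mathbf{Z}^k)-\nabla h(\mathbf{P}^0),\mathbf{P}-\mathbf{Z}^k\rangle$. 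Substituting everything in and collecting the linear-in-$(\mathbf{P}-\mathbf{Z}^k)$ terms, the coefficient is exactly $\nabla H_k(\mathbf{Z}^k) + \sigma\rho_k(\nabla h(\mathbf{Z}^k) - \nabla h(\mathbf{P}^0)) = \nabla\phi_k(\mathbf{Z}^k) = 0$, so those terms vanish and we are left with $\phi_k(\mathbf{P}) - \phi_k(\mathbf{Z}^k) = \sigma\rho_k D_h(\mathbf{P},\mathbf{Z}^k)$, as desired.

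I would also include a short justification that the representation \eqref{eq:affin} is legitimate — i.e., that $H_k$ is genuinely affine and that the Bregman coefficient is $\sigma\rho_k$ — by a parallel induction on the recursion \eqref{eq:estimate4}: the update $\phi_{k+1} = (1-\theta_k)\phi_k + \theta_k(\text{affine in }\mathbf{P})$ multiplies the existing Bregman coefficient $\sigma\rho_k$ by $(1-\theta_k)$, yielding $\sigma\rho_{k+1}$ since $\rho_{k+1} = (1-\theta_k)\rho_k$, while the affine parts combine into a new affine function $H_{k+1}$; the term $f(\mathbf{P}^{k+1}) + \beta_k\langle\nabla h(\mathbf{Y}^k)-\nabla h(\mathbf{P}^{k+1}),\mathbf{P}-\mathbf{P}^{k+1}\rangle - \nu_k$ is indeed affine in $\mathbf{P}$.

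The main obstacle I anticipate is purely a matter of bookkeeping rather than any deep difficulty: one must be careful about differentiability and domain issues — $\mathbf{Z}^k$ must lie in $\mathrm{int}\,\mathrm{dom}\,h$ so that $\nabla h(\mathbf{Z}^k)$ is defined and the first-order condition $\nabla\phi_k(\mathbf{Z}^k)=0$ actually holds (this uses that $h$ is Legendre, so the minimizer of $\phi_k$, being a sum of an affine function and a multiple of $D_h(\cdot,\mathbf{P}^0)$, stays in the interior by essential smoothness). Also, applying the three-points identity requires $h$ to be differentiable at both $\mathbf{Z}^k$ and $\mathbf{P}^0$, which holds since $\mathbf{P}^0 = \mathbf{1}_n\mathbf{1}_m^T$ is interior. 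Once these regularity points are dispatched, the algebra collapses cleanly via the optimality condition.
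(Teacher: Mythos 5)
Your proposal is correct and follows essentially the same route as the paper: use the decomposition $\phi_k = H_k + \sigma\rho_k D_h(\cdot,\mathbf{P}^0)$ from \eqref{eq:affin}, invoke the first-order optimality condition at $\mathbf{Z}^k$ to eliminate the linear term, and finish with the three-points identity \eqref{three point}. The ``induction on $k$'' framing is superfluous since the main step is a direct calculation for each fixed $k$ (the base case adds nothing), but you do usefully supply a justification for why \eqref{eq:affin} holds (affine parts combine, Bregman coefficient scales by $1-\theta_k$), which the paper asserts without proof, and your attention to the regularity requirements ($\mathbf{Z}^k \in \mathrm{int}\,\mathrm{dom}\,h$ via essential smoothness) is a point the paper glosses over.
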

 \begin{proof}
 	From \eqref{eq:affin}, we know $	\phi_{k}(\mathbf{P})=H_{k}(\mathbf{P})+\sigma\rho_{k}D_{h}(\mathbf{P},\mathbf{P}^{0})$, and $H_{k}(\mathbf{P})$ is an affine function.  Since $\mathbf{Z}^{k}$ is the optimal solution of $ \phi_{k}(\mathbf{P})$, so we have
 	\begin{equation*}
 		\nabla H_{k} (\mathbf{Z}^{k})+\sigma\rho_k(\nabla h (\mathbf{Z}^{k})-\nabla h(\mathbf{P}^{0})=0.
 	\end{equation*}
 	Since $H_{k}(\cdot)$ is affine function, so for any $\mathbf{P}\in$ dom $f$ $\cap$ dom $h$, we have
 	\begin{equation*}
 		\begin{aligned}
 				H_{k}(\mathbf{P})&=H_{k}(\mathbf{Z}^{k})+\langle \nabla H_{k}(\mathbf{Z}^{k}),\mathbf{P}-\mathbf{Z}^{k}\rangle\\
 				&=H_{k}(\mathbf{Z}^{k})-\sigma\rho_{k}\langle(\nabla h (\mathbf{Z}^{k})-\nabla h(\mathbf{P}^{0})),\mathbf{P}-\mathbf{Z}^{k}\rangle\\
 				&=H_{k}(\mathbf{Z}^{k})-\sigma\rho_{k}(D_{h}(\mathbf{P},\mathbf{P}^{0})-D_{h}(\mathbf{P},\mathbf{Z}^{k})-D_{h}(\mathbf{Z}^{k},\mathbf{P}^{0})).
 		\end{aligned}
 	\end{equation*}
 	Rewriting the above equation can complete the proof.
 	
 \end{proof}

According to Lemma \ref{lem:affin}, we have
\begin{equation}
	\phi_{k}(\mathbf{Z}^{k+1})=\phi_{k}(\mathbf{Z}^{k})+\sigma\rho_{k}D_{h}(\mathbf{Z}^{k+1},\mathbf{Z}^{k}).
	\label{eq:lem4.2}
\end{equation}

Now we can begin to discuss the relationship between $f(\mathbf{P}^{k}) $ and $\phi_{k}^{*}$. Indeed, we have the following result.

\begin{lem}
	Let $\{\mathbf{P}^{k}\}$ be the sequence generated by the AIBPUOT in Algorithm \ref{alg:AIBPUOT}, $D_{h}$ has the TSE $\gamma\geqslant1$ and TSC $\tau>0$, and $\theta_{k}$ is chosen such that
	\begin{equation}
		\tau\beta_{k}\theta_{k}^{\gamma}=\sigma\rho_{k}(1-\theta_{k}).
		\label{eq:choosetheta}
	\end{equation}
	If $f(\mathbf{P}^{k})\leqslant \phi_{k}^{*} +\delta_{k}$ for some $k\geqslant0$ and $\delta_{k}\geqslant0$, then we have
	\begin{equation}
		f(\mathbf{P}^{k+1})\leqslant \phi_{k+1}^{*} +\delta_{k+1},
		\label{eq:lem4.3.3}
	\end{equation}
	where $\delta_{0}=0$, $\delta_{k+1}:=(1-\theta_{k})\delta_{k}+\nu_{k}$.
	\label{lem:4.3}
\end{lem}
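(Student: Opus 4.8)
The plan is to run the standard Nesterov estimate-sequence induction step. Starting from the hypothesis $f(\mathbf{P}^{k})\leqslant\phi_k^*+\delta_k$ and the definition $\phi_{k+1}^*=\min_{\mathbf P}\phi_{k+1}(\mathbf P)=\phi_{k+1}(\mathbf Z^{k+1})$, I would expand $\phi_{k+1}(\mathbf Z^{k+1})$ using the recursion \eqref{eq:estimate4} evaluated at $\mathbf P=\mathbf Z^{k+1}$:
\begin{equation*}
\phi_{k+1}(\mathbf Z^{k+1})=(1-\theta_k)\phi_k(\mathbf Z^{k+1})+\theta_k\Big(f(\mathbf P^{k+1})+\beta_k\langle\nabla h(\mathbf Y^k)-\nabla h(\mathbf P^{k+1}),\mathbf Z^{k+1}-\mathbf P^{k+1}\rangle-\nu_k\Big).
\end{equation*}
Then I would substitute \eqref{eq:lem4.2}, i.e. $\phi_k(\mathbf Z^{k+1})=\phi_k(\mathbf Z^k)+\sigma\rho_k D_h(\mathbf Z^{k+1},\mathbf Z^k)=\phi_k^*+\sigma\rho_k D_h(\mathbf Z^{k+1},\mathbf Z^k)$, and apply the induction hypothesis $\phi_k^*\geqslant f(\mathbf P^k)-\delta_k$ to get a lower bound
\begin{equation*}
\phi_{k+1}^*\geqslant(1-\theta_k)\big(f(\mathbf P^k)-\delta_k\big)+(1-\theta_k)\sigma\rho_k D_h(\mathbf Z^{k+1},\mathbf Z^k)+\theta_k f(\mathbf P^{k+1})+\theta_k\beta_k\langle\nabla h(\mathbf Y^k)-\nabla h(\mathbf P^{k+1}),\mathbf Z^{k+1}-\mathbf P^{k+1}\rangle-\theta_k\nu_k.
\end{equation*}

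Next I would bound $(1-\theta_k)f(\mathbf P^k)$ from below by convexity of $f$: since $\mathbf Y^k=\theta_k\mathbf Z^k+(1-\theta_k)\mathbf P^k$, we have $\mathbf P^k=\frac{1}{1-\theta_k}(\mathbf Y^k-\theta_k\mathbf Z^k)$, but it is cleaner to use the subgradient inequality at $\mathbf P^{k+1}$ (the same one from \eqref{eq:abptu} with $\mathbf P=\mathbf P^k$) together with $(1-\theta_k)\mathbf P^k+\theta_k\mathbf Z^k=\mathbf Y^k$ to combine $(1-\theta_k)f(\mathbf P^k)+\theta_k\big(\text{inner product terms}\big)$ into something involving $f(\mathbf P^{k+1})$, $\langle\nabla h(\mathbf Y^k)-\nabla h(\mathbf P^{k+1}),\mathbf Y^k-\mathbf P^{k+1}\rangle$ and $\nu_k$. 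The three-points identity \eqref{three point} then rewrites $\langle\nabla h(\mathbf Y^k)-\nabla h(\mathbf P^{k+1}),\mathbf Y^k-\mathbf P^{k+1}\rangle$ (and the $\mathbf Z^{k+1}$-term) in terms of Bregman distances $D_h(\cdot,\mathbf Y^k)$, $D_h(\cdot,\mathbf P^{k+1})$, $D_h(\mathbf P^{k+1},\mathbf Y^k)$. After collecting terms, I expect to arrive at
\begin{equation*}
\phi_{k+1}^*\geqslant f(\mathbf P^{k+1})-\big((1-\theta_k)\delta_k+\nu_k\big)+\Big[(1-\theta_k)\sigma\rho_k D_h(\mathbf Z^{k+1},\mathbf Z^k)-\tau\beta_k\theta_k^{\gamma}D_h(\mathbf Z^{k+1},\mathbf Z^k)\Big]+(\text{nonnegative Bregman remainders}),
\end{equation*}
where the key step producing the bracketed term is applying the triangle scaling property \eqref{eq:tsp} to $\beta_k D_h(\mathbf Y^k-(\text{shift}),\dots)$ to turn a $\beta_k D_h$ evaluated at convex combinations into $\tau\beta_k\theta_k^{\gamma}D_h(\mathbf Z^{k+1},\mathbf Z^k)$. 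The parameter choice \eqref{eq:choosetheta}, $\tau\beta_k\theta_k^{\gamma}=\sigma\rho_k(1-\theta_k)$, makes the bracketed term vanish exactly, and dropping the remaining nonnegative Bregman terms yields $\phi_{k+1}^*\geqslant f(\mathbf P^{k+1})-\delta_{k+1}$ with $\delta_{k+1}=(1-\theta_k)\delta_k+\nu_k$, which is \eqref{eq:lem4.3.3}.

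The main obstacle will be the bookkeeping in the middle step: correctly pairing the convexity inequality for $f$ at $\mathbf P^k$ with the momentum identity $\mathbf Y^k=\theta_k\mathbf Z^k+(1-\theta_k)\mathbf P^k$ so that the gradient inner products telescope, and then applying the three-points identity and the triangle scaling property to exactly the right Bregman pairs so that the coefficient of $D_h(\mathbf Z^{k+1},\mathbf Z^k)$ matches \eqref{eq:choosetheta}. I would also need to be careful that all points $\mathbf Y^k,\mathbf P^{k+1},\mathbf Z^k,\mathbf Z^{k+1}$ lie in $\operatorname{rint}\operatorname{dom}h$ so that \eqref{eq:tsp} and \eqref{three point} apply; this follows because each is either a Scaling-algorithm output of the form $\operatorname{Diag}(\mathbf u)\mathbf G\operatorname{Diag}(\mathbf v)$ with strictly positive entries, a minimizer of a strictly convex Bregman-regularized function, or a convex combination of such points. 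The remaining manipulations (expanding $D_h$ for the entropy kernel, verifying $\delta_0=0$ matches $f(\mathbf P^0)\leqslant\phi_0^*=\phi_0(\mathbf P^0)=f(\mathbf P^0)$) are routine.
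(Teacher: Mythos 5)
Your proposal is correct and follows essentially the same route as the paper: expand $\phi_{k+1}^*=\phi_{k+1}(\mathbf Z^{k+1})$, substitute $\phi_k(\mathbf Z^{k+1})=\phi_k^*+\sigma\rho_k D_h(\mathbf Z^{k+1},\mathbf Z^k)$ and the induction hypothesis, pair the subgradient inequality at $\mathbf P=\mathbf P^k$ with the momentum identity $\mathbf Y^k=\theta_k\mathbf Z^k+(1-\theta_k)\mathbf P^k$, apply the three-points identity and then TSP to reduce $\beta_k D_h(\cdot,\mathbf Y^k)$ to $\tau\beta_k\theta_k^\gamma D_h(\mathbf Z^{k+1},\mathbf Z^k)$, and let \eqref{eq:choosetheta} cancel the coefficient of $D_h(\mathbf Z^{k+1},\mathbf Z^k)$. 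The paper merely organizes the same computation slightly differently (it isolates the chain of inequalities you describe as a standalone estimate before plugging it into $\phi_{k+1}^*$), and your added remark about keeping the iterates in $\operatorname{rint}\operatorname{dom}h$ is a legitimate point the paper leaves implicit.
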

\begin{proof}
	Firstly, we have
	\begin{equation}
		\begin{aligned}
			&(1-\theta_{k})f(\mathbf{P}^{k})+\theta_{k}\left(f(\mathbf{P}^{k+1})+\beta_{k}\langle\nabla \mathit{h}(\mathbf{Y}^{k})- \nabla \mathit{h}(\mathbf{P}^{k+1}),\mathbf{P}-\mathbf{P}^{k+1} \rangle -\nu_{k}\right)\\
			&\geqslant(1-\theta_{k})\left(f(\mathbf{P}^{k+1})+\beta_{k}\langle\nabla \mathit{h}(\mathbf{Y}^{k})- \nabla \mathit{h}(\mathbf{P}^{k+1}),\mathbf{P}^{k}-\mathbf{P}^{k+1} \rangle -\nu_{k}\right)\\
			&+\theta_{k}\left(f(\mathbf{P}^{k+1})+\beta_{k}\langle\nabla \mathit{h}(\mathbf{Y}^{k})- \nabla \mathit{h}(\mathbf{P}^{k+1}),\mathbf{P}-\mathbf{P}^{k+1} \rangle -\nu_{k}\right)\\
			&=f(\mathbf{P}^{k+1})+\beta_{k}\langle\nabla \mathit{h}(\mathbf{Y}^{k})- \nabla \mathit{h}(\mathbf{P}^{k+1}),[(1-\theta_{k})\mathbf{P}^{k}+\theta_{k}\mathbf{Z}^{k+1}]-\mathbf{P}^{k+1} \rangle -\nu_{k}\\
			&=f(\mathbf{P}^{k+1})+\beta_{k}D_{h}((1-\theta_{k})\mathbf{P}^{k}+\theta_{k}\mathbf{Z}^{k+1},\mathbf{P}^{k+1}) + \beta_{k}D_{h}(\mathbf{P}^{k+1},\mathbf{Y}^{k})\\
			&-\beta_{k}D_{h}((1-\theta_{k})\mathbf{P}^{k}+\theta_{k}\mathbf{Z}^{k+1},\mathbf{Y}^{k}) -\nu_{k}\\
			&\geqslant f(\mathbf{P}^{k+1})-\beta_{k}D_{h}((1-\theta_{k})\mathbf{P}^{k}+\theta_{k}\mathbf{Z}^{k+1},\mathbf{Y}^{k}) -\nu_{k}\\
			&=f(\mathbf{P}^{k+1})-\beta_{k}D_{h}((1-\theta_{k})\mathbf{P}^{k}+\theta_{k}\mathbf{Z}^{k+1},(1-\theta_{k})\mathbf{P}^{k}+\theta_{k}\mathbf{Z}^{k}) -\nu_{k}\\
			&\geqslant f(\mathbf{P}^{k+1}) - \tau\beta_{k}\theta_{k}^{\gamma}D_{h}(\mathbf{Z}^{k+1},\mathbf{Z}^{k})-\nu_{k},
		\end{aligned}
		\label{eq:lem4.3.1}
	\end{equation}
	where the first inequality follows from \eqref{eq:abptu} with $\bf{P}=\bf{P}^{k}$, the second equality follows from the three points identity \eqref{three point}, and the last inequality follows from the triangle scaling property of $D_{h}$.
	
	When $k=0$, according to \eqref{eq:estimate4}, we have $f(\mathbf{P}^{0})\leqslant\phi_{k}^{*} $. assume exist some $k\geqslant0$ and $\delta_{k}\geqslant0$, $f(\mathbf{P}^{k})\leqslant \phi_{k}^{*} +\delta_{k}$, then combine it and \eqref{eq:lem4.2}, we obtain that
	\begin{equation}
		\phi_{k}(\mathbf{Z}^{k+1})\geqslant f(\mathbf{P}^{k})+\sigma\rho_{k}D_{h}(\mathbf{Z}^{k+1},\mathbf{Z}^{k})-\delta_{k}.
		\label{eq:lem4.3.2}
	\end{equation}
	Therefore, we can see that
	\begin{equation}
		\begin{aligned}
			\phi_{k+1}^{*}&=\phi_{k+1}(\mathbf{Z}^{k+1})\\
			&=(1-\theta_{k})\phi_{k}(\mathbf{Z}^{k+1})+\theta_{k}\left(f(\mathbf{P}^{k+1})+\beta_{k}\langle\nabla \mathit{h}(\mathbf{Y}^{k})- \nabla \mathit{h}(\mathbf{P}^{k+1}),\mathbf{Z}^{k+1}-\mathbf{P}^{k+1} \rangle -\nu_{k} \right)\\
			&\geqslant (1-\theta_{k})f(\mathbf{P}^{k})+\sigma\rho_{k}(1-\theta_{k})D_{h}D_{h}(\mathbf{Z}^{k+1},\mathbf{Z}^{k})-(1-\theta_{k})\delta_{k}\\
			&+\theta_{k}\left(f(\mathbf{P}^{k+1})+\beta_{k}\langle\nabla \mathit{h}(\mathbf{Y}^{k})- \nabla \mathit{h}(\mathbf{P}^{k+1}),\mathbf{Z}^{k+1}-\mathbf{P}^{k+1} \rangle -\nu_{k} \right)\\
			&\geqslant f(\mathbf{P}^{k+1}) +(\sigma\rho_{k}(1-\theta_{k})-\tau\beta_{k}\theta_{k}^{\gamma}) D_{h}(\mathbf{Z}^{k+1},\mathbf{Z}^{k})-\nu_{k}-(1-\theta_{k})\delta_{k}.
		\end{aligned}
	\end{equation}
	The first inequality follows from \eqref{eq:lem4.3.2}, and the second inequality follows from \eqref{eq:lem4.3.1}. When $\tau\beta_{k}\theta_{k}^{\gamma}=\sigma\rho_{k}(1-\theta_{k})$, \eqref{eq:lem4.3.3} is established. This completes the proof.
\end{proof}

Then, we obtain the theorem about the reduction of the objective value.

\begin{thm}
	Let $\{\mathbf{P}^{k}\}$ be the sequence generated by the AIBPUOT in Algorithm \ref{alg:AIBPUOT}, for any $\mathbf{P}^{*}$, which is the optimal solution of problem \eqref{eq:uot}, i.e.,$f(\mathbf{P})$, we have
	\begin{equation}
		f(\mathbf{P}^{N})-f(\mathbf{P}^{*})\leqslant \rho_{N}\left(f(\mathbf{P}^{0})-f(\mathbf{P}^{*})+\sigma D_{h}(\mathbf{P}^{*},\mathbf{P}^{0}) \right) +\delta_{N}.
	\end{equation}
	\label{thm:4.4}
\end{thm}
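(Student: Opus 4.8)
The plan is to combine the two main lemmas already established---Lemma~\ref{lem:phi-f} (equivalently its induced form \eqref{eq:4.9}) and Lemma~\ref{lem:4.3}---and to evaluate the resulting chain of inequalities at the optimal point $\mathbf{P}^{*}$. First I would invoke Lemma~\ref{lem:4.3}: since $\delta_{0}=0$ and $f(\mathbf{P}^{0})\leqslant\phi_{0}^{*}$ (which holds because $\phi_{0}(\mathbf{P})=f(\mathbf{P}^{0})+\sigma D_{h}(\mathbf{P},\mathbf{P}^{0})$ and $D_{h}\geqslant0$, so $\phi_{0}^{*}=\phi_{0}(\mathbf{P}^{0})=f(\mathbf{P}^{0})$), the hypothesis of the lemma holds at $k=0$, and by induction $f(\mathbf{P}^{k})\leqslant\phi_{k}^{*}+\delta_{k}$ for all $k\geqslant0$. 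In particular, taking $k=N$ gives
\begin{equation*}
	f(\mathbf{P}^{N})\leqslant\phi_{N}^{*}+\delta_{N}=\min_{\mathbf{P}}\phi_{N}(\mathbf{P})+\delta_{N}\leqslant\phi_{N}(\mathbf{P}^{*})+\delta_{N}.
\end{equation*}

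Next I would bound $\phi_{N}(\mathbf{P}^{*})$ from above using \eqref{eq:4.9} with $\mathbf{P}=\mathbf{P}^{*}$, namely $\phi_{N}(\mathbf{P}^{*})-f(\mathbf{P}^{*})\leqslant\rho_{N}(\phi_{0}(\mathbf{P}^{*})-f(\mathbf{P}^{*}))$, and then substitute the explicit form $\phi_{0}(\mathbf{P}^{*})=f(\mathbf{P}^{0})+\sigma D_{h}(\mathbf{P}^{*},\mathbf{P}^{0})$. Chaining these gives
\begin{equation*}
	f(\mathbf{P}^{N})-f(\mathbf{P}^{*})\leqslant\phi_{N}(\mathbf{P}^{*})-f(\mathbf{P}^{*})+\delta_{N}\leqslant\rho_{N}\left(f(\mathbf{P}^{0})-f(\mathbf{P}^{*})+\sigma D_{h}(\mathbf{P}^{*},\mathbf{P}^{0})\right)+\delta_{N},
\end{equation*}
which is exactly the claimed inequality.

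I should double-check two small points of hygiene. One is that applying \eqref{eq:4.9} and Lemma~\ref{lem:4.3} requires $\mathbf{P}^{*}\in\operatorname{dom}f$ (and that the $D_{h}$ terms are finite), which is immediate since $\mathbf{P}^{*}$ is a minimizer of $f$ and $f$ is proper; for the Bregman-distance terms one uses that $\mathbf{P}^{0}$ lies in the interior of the domain of the entropy kernel. The other is that Lemma~\ref{lem:4.3} presupposes the step sizes $\theta_{k}$ are chosen by \eqref{eq:choosetheta}, so the theorem inherits that standing assumption (as does the whole algorithm AIBPUOT). Given those, the argument is essentially a two-line deduction from the lemmas.

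Honestly, there is no serious obstacle in this particular theorem: all the work has been front-loaded into Lemma~\ref{lem:phi-f} and Lemma~\ref{lem:4.3}. The only mildly delicate point is getting the induction for $f(\mathbf{P}^{k})\leqslant\phi_{k}^{*}+\delta_{k}$ started correctly---verifying the base case $k=0$ carefully---and remembering that $\phi_{N}^{*}=\min_{\mathbf{P}}\phi_{N}(\mathbf{P})\leqslant\phi_{N}(\mathbf{P}^{*})$ is what licenses plugging in $\mathbf{P}^{*}$. After that, it is a direct substitution of the closed form of $\phi_{0}$ and collecting terms; the convergence-rate consequences (choosing $\beta_{k}\equiv\beta$ and summable $\nu_{k}$ so that $\rho_{N}$ and $\delta_{N}$ vanish at the advertised $\mathcal{O}(1/N^{\gamma})$ rate) would then be discussed after the theorem rather than inside the proof.
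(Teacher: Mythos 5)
Your proposal is correct and follows essentially the same chain as the paper: apply Lemma~\ref{lem:4.3} to get $f(\mathbf{P}^{N})\leqslant\phi_{N}(\mathbf{Z}^{N})+\delta_{N}$, use $\phi_{N}(\mathbf{Z}^{N})=\min_{\mathbf{P}}\phi_{N}(\mathbf{P})\leqslant\phi_{N}(\mathbf{P}^{*})$, invoke \eqref{eq:4.9}, and substitute the closed form of $\phi_{0}$. The extra care you take in spelling out the base case $f(\mathbf{P}^{0})\leqslant\phi_{0}^{*}$ is a minor elaboration of what the paper already builds into Lemma~\ref{lem:4.3}; otherwise the two proofs coincide.
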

\begin{proof}
	From the Lemma \ref{lem:4.3}, we have 
	$f(\mathbf{P}^{N})\leqslant \phi_{N}(\mathbf{Z}^{N}) +\delta_{N},\forall N\geqslant0$. Then we can see that
	\begin{equation}
		\begin{aligned}
			f(\mathbf{P}^{N})-f(\mathbf{P}^{*})&\leqslant\phi_{N}(\mathbf{Z}^{N}) -f(\mathbf{P}^{*})+\delta_{N}\\
			&\leqslant\phi_{N}(\mathbf{P}^{*}) -f(\mathbf{P}^{*})+\delta_{N}\\
			&\leqslant \rho_{N}\left(\phi_{0}(\mathbf{P}^{*}) -f(\mathbf{P}^{*})\right)+\delta_{N}\\
			&\leqslant \rho_{N}(f(\mathbf{P}^{0})-f(\mathbf{P}^{*})+\sigma D_{h}(\mathbf{P},\mathbf{P}^{0}))+\delta_{N}.
		\end{aligned}
	\end{equation}
	The third inequality follows from \eqref{eq:4.9}.
\end{proof}

If we choose $\theta_{k}$ follows from \eqref{eq:choosetheta}, we can see that $0<\theta_{k}<1$, hence $\rho_{N}\rightarrow0$. As long as we can guarantee $\delta_{N}\rightarrow0$ when we choose $\nu_{k}$, then we obtain that $f(\mathbf{P}^{N})$ converges to $f(\mathbf{P}^{*})=\min_{\mathbf{P}}f(\mathbf{P})$ from Theorem \ref{thm:4.4}. If we want to know converge rate of the AIBPUOT, we need to evaluate magnitude of $\rho_{N}$ and $\delta_{N}$. And this is what we are going to do next. Our proof mainly refers to \cite{yang2022bregman}.

\begin{lem}
	 For any $N \geqslant 1 $, we have
	 \begin{equation}
	 	\left(1+\left(\sigma / \tau\right)^{\frac{1}{\gamma}} \sum_{k=0}^{N-1} \beta_{k}^{-\frac{1}{\gamma}}\right)^{-\gamma} \leqslant \rho_{N} \leqslant\left(1+\gamma^{-1}\left(\sigma / \tau\right)^{\frac{1}{\gamma}} \sum_{k=0}^{N-1} \beta_{k}^{-\frac{1}{\gamma}}\right)^{-\gamma}.
	 	\label{eq:lem4.5}
	 \end{equation}
	 In addition, if $\sup_{k}\{ \beta_{k}\} < \infty $ , then $\rho_{N}=\mathcal{O}\left( \left(\sum_{k=0}^{N-1} \beta_{k}^{-\frac{1}{\gamma}}\right)^{-\gamma} \right)$.
	 \label{lem:4.5}
\end{lem}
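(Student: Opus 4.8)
The plan is to reduce the whole statement to a telescoping estimate for the auxiliary sequence $a_k:=\rho_k^{-1/\gamma}$. First I would combine the defining relation \eqref{eq:choosetheta} with the recursion $\rho_{k+1}=(1-\theta_k)\rho_k$ that follows directly from $\rho_k=\prod_{i=0}^{k-1}(1-\theta_i)$: substituting $\sigma\rho_k(1-\theta_k)=\sigma\rho_{k+1}$ into $\tau\beta_k\theta_k^{\gamma}=\sigma\rho_k(1-\theta_k)$ gives the clean identity $\theta_k=(\sigma/\tau)^{1/\gamma}\beta_k^{-1/\gamma}\rho_{k+1}^{1/\gamma}$. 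Along the way one records that $\theta_k$ is well defined and lies in $(0,1)$: the map $\theta\mapsto \tau\beta_k\theta^{\gamma}+\sigma\rho_k\theta-\sigma\rho_k$ is continuous and strictly increasing on $[0,1]$, negative at $\theta=0$ and positive at $\theta=1$, so it has a unique root in $(0,1)$; consequently $\{\rho_k\}$ is strictly decreasing and all the fractional powers appearing below are legitimate.

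Next I would rewrite the increment of $a_k$. From $\rho_{k+1}=(1-\theta_k)\rho_k$ we get $\rho_k^{-1/\gamma}=\rho_{k+1}^{-1/\gamma}(1-\theta_k)^{1/\gamma}$, hence
\begin{equation*}
a_{k+1}-a_k=\rho_{k+1}^{-1/\gamma}-\rho_k^{-1/\gamma}=\rho_{k+1}^{-1/\gamma}\bigl(1-(1-\theta_k)^{1/\gamma}\bigr).
\end{equation*}
The technical heart of the argument is the pair of elementary scalar inequalities $\tfrac1\gamma\,\theta\le 1-(1-\theta)^{1/\gamma}\le\theta$, valid for $\theta\in[0,1)$ and $\gamma\ge 1$. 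The left bound holds because $t\mapsto 1-(1-t)^{1/\gamma}-\tfrac1\gamma t$ vanishes at $t=0$ and has nonnegative derivative (here $\gamma\ge1$ is exactly what forces $(1-t)^{1/\gamma-1}\ge 1$); the right bound holds because $t\mapsto t-1+(1-t)^{1/\gamma}$ is concave (again using $1/\gamma\le 1$) and vanishes at both endpoints $t=0$ and $t=1$. Substituting these bounds together with $\theta_k=(\sigma/\tau)^{1/\gamma}\beta_k^{-1/\gamma}\rho_{k+1}^{1/\gamma}$ makes the factor $\rho_{k+1}^{-1/\gamma}$ cancel, yielding
\begin{equation*}
\gamma^{-1}(\sigma/\tau)^{1/\gamma}\beta_k^{-1/\gamma}\le a_{k+1}-a_k\le (\sigma/\tau)^{1/\gamma}\beta_k^{-1/\gamma}.
\end{equation*}

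Finally I would sum from $k=0$ to $N-1$, use $a_0=\rho_0^{-1/\gamma}=1$ to get
\begin{equation*}
1+\gamma^{-1}(\sigma/\tau)^{1/\gamma}\sum_{k=0}^{N-1}\beta_k^{-1/\gamma}\ \le\ \rho_N^{-1/\gamma}\ \le\ 1+(\sigma/\tau)^{1/\gamma}\sum_{k=0}^{N-1}\beta_k^{-1/\gamma},
\end{equation*}
and raise everything to the power $-\gamma$, which reverses both inequalities, to obtain exactly \eqref{eq:lem4.5}. For the last assertion, $\sup_k\beta_k=:B<\infty$ forces $\sum_{k=0}^{N-1}\beta_k^{-1/\gamma}\ge N B^{-1/\gamma}\to\infty$, so in the upper bound the constant $1$ is eventually dominated and $\rho_N\le\bigl(\gamma^{-1}(\sigma/\tau)^{1/\gamma}\bigr)^{-\gamma}\bigl(\sum_{k=0}^{N-1}\beta_k^{-1/\gamma}\bigr)^{-\gamma}$, i.e. $\rho_N=\mathcal{O}\bigl((\sum_{k=0}^{N-1}\beta_k^{-1/\gamma})^{-\gamma}\bigr)$. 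I expect the only real obstacle to be the clean verification of the two scalar inequalities and the careful use of the hypothesis $\gamma\ge1$ at the two places where convexity/concavity must point the right way; the substitution, the telescoping, and the monotone inversion are routine bookkeeping.
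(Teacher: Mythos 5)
Your proof is correct, and its overall skeleton matches the paper's: derive $\rho_{k+1}^{-1}-\rho_k^{-1}=(\sigma/\tau)^{1/\gamma}\beta_k^{-1/\gamma}\rho_{k+1}^{1/\gamma-1}$ from \eqref{eq:choosetheta} and $\rho_{k+1}=(1-\theta_k)\rho_k$, bound the increment $\rho_{k+1}^{-1/\gamma}-\rho_k^{-1/\gamma}$ on both sides by $(\sigma/\tau)^{1/\gamma}\beta_k^{-1/\gamma}$ and $\gamma^{-1}(\sigma/\tau)^{1/\gamma}\beta_k^{-1/\gamma}$, telescope from $\rho_0=1$, and raise to the power $-\gamma$. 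The genuine difference is in how you establish the two-sided increment bound. You write $\rho_{k+1}^{-1/\gamma}-\rho_k^{-1/\gamma}=\rho_{k+1}^{-1/\gamma}\bigl(1-(1-\theta_k)^{1/\gamma}\bigr)$ and invoke the self-contained scalar inequalities $\gamma^{-1}\theta\leq 1-(1-\theta)^{1/\gamma}\leq\theta$ for $\theta\in[0,1)$, $\gamma\geq1$, which you verify by a derivative bound and a concavity argument; combined with $\theta_k=(\sigma/\tau)^{1/\gamma}\beta_k^{-1/\gamma}\rho_{k+1}^{1/\gamma}$, the factor $\rho_{k+1}^{-1/\gamma}$ cancels and both directions follow at once. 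The paper instead factors $\rho_{k+1}^{-1/\gamma}-\rho_k^{-1/\gamma}=\rho_{k+1}^{1-1/\gamma}\bigl(\rho_{k+1}^{-1}-\rho_{k+1}^{1/\gamma-1}\rho_k^{-1/\gamma}\bigr)$ and uses monotonicity $\rho_{k+1}\leq\rho_k$ for the upper bound and the weighted AM--GM (Young) inequality $\rho_{k+1}^{1/\gamma-1}\rho_k^{-1/\gamma}\leq(1-\gamma^{-1})\rho_{k+1}^{-1}+\gamma^{-1}\rho_k^{-1}$ for the lower bound, handling the two directions by separate manipulations. Both encode the same convexity fact, but yours is slightly more modular because the elementary inequality is isolated from the $\rho_k$-algebra and applied uniformly. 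Your preliminary remark that $\theta_k$ is uniquely defined in $(0,1)$ by a monotone root-finding argument is a useful check the paper leaves implicit.
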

\begin{proof}
	Due to $\rho_{k+1}=(1-\theta_{k})\rho_{k}$, we can obtain $\theta_{k}=1-\rho_{k+1}/\rho_{k}$. Substituting it in \eqref{eq:choosetheta}, we can see that
	\begin{equation}
		\begin{aligned}
				\tau \beta_{k}\left(1-\rho_{k+1} / \rho_{k}\right)^{\gamma}&=\sigma \rho_{k+1} \\
				 \rho_{k+1}^{-1}-\rho_{k}^{-1}&=\left(\sigma / \tau\right)^{\frac{1}{\gamma}} \beta_{k}^{-\frac{1}{\gamma}} \rho_{k+1}^{\frac{1}{\gamma}-1} .
		\end{aligned}
		\label{eq:lem4.5.1}
	\end{equation}
	And because $\gamma\geqslant1$ and $\rho_{k+1}\leqslant\rho_{k}$, we have
	\begin{equation*}
		\begin{aligned}
				\rho_{k+1}^{-\frac{1}{\gamma}}-\rho_{k}^{-\frac{1}{\gamma}}&= \rho_{k+1}^{1-\frac{1}{\gamma}}\left(\rho_{k+1}^{-1}-\rho_{k+1}^{\frac{1}{\gamma}-1} \rho_{k}^{-\frac{1}{\gamma}}\right) \\
				&\leqslant \rho_{k+1}^{1-\frac{1}{\gamma}}\left(\rho_{k+1}^{-1}-\rho_{k}^{-1}\right) \\
				&= \rho_{k+1}^{1-\frac{1}{\gamma}} \left(\sigma / \tau\right)^{\frac{1}{\gamma}} \beta_{k}^{-\frac{1}{\gamma}} \rho_{k+1}^{\frac{1}{\gamma}-1}\\
				&= \left(\sigma / \tau\right)^{\frac{1}{\gamma}} \beta_{k}^{-\frac{1}{\gamma}}.	
		\end{aligned}
	\end{equation*}
 Summing it from $k = 0 $ to $k = N-1 $, we have
	\begin{equation*}
		\rho_{N}^{-\frac{1}{\gamma}} \leqslant 1+\left(\sigma / \tau\right)^{\frac{1}{\gamma}} \sum_{k=0}^{N-1} \beta_{k}^{-\frac{1}{\gamma}}.
	\end{equation*}
	This is the lower bound of $\rho_{N}$. \\
	And then according to Young’s inequality, we have 
	\begin{equation*}
		\begin{aligned}
				\rho_{k+1}^{\frac{1}{\gamma}-1} \rho_{k}^{-\frac{1}{\gamma}} &\leqslant\left(1-\gamma^{-1}\right) \rho_{k+1}^{-1}+\gamma^{-1} \rho_{k}^{-1} \\
				\rho_{k+1}^{\frac{1}{\gamma}-1} \rho_{k}^{-\frac{1}{\gamma}}&\leqslant \rho_{k+1}^{-1}-\gamma^{-1} \left( \rho_{k+1}^{-1}-\rho_{k}^{-1}\right)\\
				\gamma^{-1} \left( \rho_{k+1}^{-1}-\rho_{k}^{-1}\right)&\leqslant \rho_{k+1}^{-1}-\rho_{k+1}^{\frac{1}{\gamma}-1} \rho_{k}^{-\frac{1}{\gamma}}\\
				\gamma^{-1} \left( \rho_{k+1}^{-1}-\rho_{k}^{-1}\right)&\leqslant \rho_{k+1}^{\frac{1}{\gamma}-1} \left(\rho_{k+1}^{-\frac{1}{\gamma}}-\rho_{k}^{-\frac{1}{\gamma}} \right).
		\end{aligned}
	\end{equation*}
	According to \eqref{eq:lem4.5.1}, we obtain that $\rho_{k+1}^{-\frac{1}{\gamma}}-\rho_{k}^{-\frac{1}{\gamma}} \geqslant \gamma^{-1}\left(\sigma / \tau\right)^{\frac{1}{\gamma}} \beta_{k}^{-\frac{1}{\gamma}}$, Summing it from $k = 0 $ to $k = N-1 $, we have
	\begin{equation*}
		\rho_{N}^{-\frac{1}{\gamma}} \geqslant 1+\gamma^{-1}\left(\sigma / \tau\right)^{\frac{1}{\gamma}} \sum_{k=0}^{N-1} \beta_{k}^{-\frac{1}{\gamma}}.
	\end{equation*}
	This is the upper bound of $\rho_{N}$. And another conclusion can be easily derived from \eqref{eq:lem4.5}. 
\end{proof}

Next we will estimate the magnitude of $\delta_{N}$.

\begin{lem}
	For all $N\geqslant1$, we have
	\begin{equation}
		\delta_{N} \leqslant \left(1+\gamma^{-1}\left(\sigma / \tau\right)^{\frac{1}{\gamma}} \sum_{i=0}^{N-1} \beta_{i}^{-\frac{1}{\gamma}}\right)^{-\gamma} \sum_{k=0}^{N-1}\left(1+\left(\frac{\sigma}{\tau}\right)^{\frac{1}{\gamma}} \sum_{i=0}^{k} \beta_{i}^{-\frac{1}{\gamma}}\right)^{\gamma} \nu_{k}.
		\label{eq:lem4.6}
	\end{equation}
 In addition, if $\{\beta_{k}\}$ is non-increasing and for some $p > 1 $ such that $p\neq \gamma+1$, $\nu_{k} \leqslant \mathcal{O}\left(\frac{\beta_{k}}{(k+1)^{p}}\right)$, 
 then we have $\delta_{N} \leqslant \mathcal{O}\left(\frac{1}{N^{p-1}}\right)$ for all $N\geqslant1$.
 \label{lem:4.6}
\end{lem}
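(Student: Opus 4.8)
The plan is to unfold the recursion $\delta_{k+1}=(1-\theta_k)\delta_k+\nu_k$ with $\delta_0=0$, which immediately gives the closed form $\delta_N=\sum_{k=0}^{N-1}\Big(\prod_{i=k+1}^{N-1}(1-\theta_i)\Big)\nu_k$. Since $\rho_k=\prod_{i=0}^{k-1}(1-\theta_i)$, the product $\prod_{i=k+1}^{N-1}(1-\theta_i)$ telescopes to $\rho_N/\rho_{k+1}$, so that $\delta_N=\rho_N\sum_{k=0}^{N-1}\rho_{k+1}^{-1}\nu_k$. Now I would invoke Lemma~\ref{lem:4.5}: the upper bound there gives $\rho_N\leqslant\big(1+\gamma^{-1}(\sigma/\tau)^{1/\gamma}\sum_{i=0}^{N-1}\beta_i^{-1/\gamma}\big)^{-\gamma}$, and the lower bound applied at index $k+1$ gives $\rho_{k+1}\geqslant\big(1+(\sigma/\tau)^{1/\gamma}\sum_{i=0}^{k}\beta_i^{-1/\gamma}\big)^{-\gamma}$, hence $\rho_{k+1}^{-1}\leqslant\big(1+(\sigma/\tau)^{1/\gamma}\sum_{i=0}^{k}\beta_i^{-1/\gamma}\big)^{\gamma}$. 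Substituting both bounds into $\delta_N=\rho_N\sum_{k=0}^{N-1}\rho_{k+1}^{-1}\nu_k$ yields exactly \eqref{eq:lem4.6}.

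For the asymptotic statement, I would specialize to $\{\beta_k\}$ non-increasing and $\nu_k\leqslant\mathcal{O}\big(\beta_k/(k+1)^p\big)$. Write $S_k:=\sum_{i=0}^{k}\beta_i^{-1/\gamma}$ and set $c:=(\sigma/\tau)^{1/\gamma}$. Because $\{\beta_k\}$ is non-increasing, $\beta_k^{-1/\gamma}$ is non-decreasing, so $S_k\geqslant(k+1)\beta_k^{-1/\gamma}$, i.e. $\beta_k^{-1}\leqslant\big(S_k/(k+1)\big)^{\gamma}$, which lets me bound $\nu_k\leqslant\mathcal{O}\big(S_k^{\gamma}/(k+1)^{p+\gamma}\big)$. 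Then the general term in the sum in \eqref{eq:lem4.6} is $\big(1+cS_k\big)^{\gamma}\nu_k\leqslant\mathcal{O}\big((1+cS_k)^{\gamma}S_k^{\gamma}/(k+1)^{p+\gamma}\big)$. Meanwhile the prefactor is $\big(1+\gamma^{-1}cS_{N-1}\big)^{-\gamma}$, which is of order $S_{N-1}^{-\gamma}$ (up to constants, once $S_{N-1}$ is bounded below by a positive constant, which holds since $S_{N-1}\geqslant\beta_0^{-1/\gamma}>0$). So $\delta_N\leqslant\mathcal{O}\big(S_{N-1}^{-\gamma}\sum_{k=0}^{N-1}S_k^{2\gamma}/(k+1)^{p+\gamma}\big)$, and since $S_k\leqslant S_{N-1}$ for $k\leqslant N-1$, this is $\leqslant\mathcal{O}\big(S_{N-1}^{\gamma}\sum_{k=0}^{N-1}(k+1)^{-(p+\gamma)}\big)$. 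Wait — this crude step loses too much; I will instead keep $S_k$ inside and compare growth rates directly, which is where the real work lies.

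The genuine obstacle is controlling the interplay between the growth of $S_k$ and the summand weights. Since $\sup_k\beta_k<\infty$, we have $\beta_k^{-1/\gamma}\geqslant c_0>0$ for some constant, so $S_k=\Theta(k)$; more precisely $c_0(k+1)\leqslant S_k\leqslant\beta_{\min}^{-1/\gamma}(k+1)$ where $\beta_{\min}$ is any positive lower bound (or, without a lower bound on $\beta_k$, one still has $S_k\geqslant c_0(k+1)$ and one uses monotonicity of $\beta_k^{-1/\gamma}$ to get $S_k=\mathcal{O}\big((k+1)\beta_{k}^{-1/\gamma}\big)$ carefully). Using $S_k=\Theta(k+1)$, the prefactor is $\Theta\big((N)^{-\gamma}\big)$ and the summand is $\Theta\big((k+1)^{\gamma}\cdot\nu_k\big)=\mathcal{O}\big((k+1)^{\gamma}\cdot\beta_k/(k+1)^{p}\big)=\mathcal{O}\big((k+1)^{\gamma-p}\big)$ (using $\beta_k=\mathcal{O}(1)$). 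Hence $\delta_N=\mathcal{O}\big(N^{-\gamma}\sum_{k=0}^{N-1}(k+1)^{\gamma-p}\big)$. Now I split into cases governed by the sign of $\gamma-p+1$: if $p>\gamma+1$ the sum converges to a constant and $\delta_N=\mathcal{O}(N^{-\gamma})=\mathcal{O}(N^{-(p-1)})$ is \emph{not} quite what is claimed — rather $\mathcal{O}(N^{-\gamma})$ dominates, but since $p-1<\gamma$ here one has $N^{-\gamma}\leqslant N^{-(p-1)}$, consistent with the stated bound; if $1<p<\gamma+1$ (equivalently $\gamma-p>-1$) then $\sum_{k=0}^{N-1}(k+1)^{\gamma-p}=\Theta(N^{\gamma-p+1})$, giving $\delta_N=\mathcal{O}\big(N^{-\gamma}\cdot N^{\gamma-p+1}\big)=\mathcal{O}(N^{1-p})$, exactly the claim; the excluded case $p=\gamma+1$ (where the sum is $\Theta(\log N)$, producing a log factor) is precisely why the hypothesis stipulates $p\neq\gamma+1$. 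I would present the argument in these cases, being careful that all $\mathcal{O}$ constants depend only on $\sigma,\tau,\gamma,p$ and $\sup_k\beta_k$, and flagging that the finer statement should really be $\delta_N=\mathcal{O}(N^{-\min(p-1,\gamma)})$ but that since the paper only claims $\mathcal{O}(N^{1-p})$ as an \emph{upper} bound the weaker assertion follows a fortiori once we also note $\gamma\geqslant1\geqslant\ldots$ — actually for $p>\gamma+1$ one has $p-1>\gamma\geqslant1$, so $N^{-\gamma}\leqslant N^{-(p-1)}$ fails; I will therefore treat the two cases separately and state the bound as $\mathcal{O}(N^{-(p-1)})$ only in the regime where it holds and $\mathcal{O}(N^{-\gamma})$ otherwise, or simply restrict to the range $1<p<\gamma+1$ that the acceleration analysis actually uses.
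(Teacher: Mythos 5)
Your derivation of the closed form $\delta_N=\rho_N\sum_{k=0}^{N-1}\rho_{k+1}^{-1}\nu_k$ and the substitution of the two bounds from Lemma~\ref{lem:4.5} (upper bound at $N$, lower bound at $k+1$) is exactly what the paper does, just obtained by unfolding the recursion rather than dividing by $\rho_{k+1}$ and telescoping; these are the same computation.

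For the asymptotic part you eventually arrive at the paper's route, though your first attempt wobbles: bounding $S_k\leqslant\beta_{\min}^{-1/\gamma}(k+1)$ assumes a positive lower bound on $\beta_k$ that the lemma does not grant. The correct move (which you note in passing and which the paper uses, written as $\left(\sum_{i=0}^{k}(\beta_k/\beta_i)^{1/\gamma}\right)^{\gamma}\leqslant(k+1)^{\gamma}$) is to exploit non-increasingness via $S_k\leqslant(k+1)\beta_k^{-1/\gamma}$ so that the $\beta_k$ from $\nu_k$ cancels against $\beta_k^{-1}$ from $S_k^{\gamma}$, leaving $(k+1)^{\gamma-p}$ with no residual $\beta$-dependence. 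When you write up the proof, lead with that cancellation rather than the $\beta_{\min}$ bound.

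Your most substantive observation, and it is correct, is that the stated conclusion $\delta_N\leqslant\mathcal{O}(N^{1-p})$ is not what the argument actually delivers in the regime $p>\gamma+1$. In that regime $\sum_{k=0}^{N-1}(k+1)^{\gamma-p}$ is bounded, so one gets $\delta_N=\mathcal{O}(N^{-\gamma})$, which is \emph{weaker} than $\mathcal{O}(N^{1-p})$ since $1-p<-\gamma$. Indeed the paper's final inequality $\int_{1}^{N}t^{\gamma-p}\,\mathrm{d}t\leqslant(\gamma+1-p)^{-1}N^{\gamma+1-p}$ is invalid when $\gamma+1-p<0$ (the right-hand side is negative while the integral is positive), so the displayed chain of inequalities breaks down there. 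The honest statement of the lemma is $\delta_N\leqslant\mathcal{O}\bigl(N^{-\min(p-1,\gamma)}\bigr)$, with the two cases $1<p<\gamma+1$ and $p>\gamma+1$ handled separately exactly as you describe. This does not damage the downstream Theorem, since with $p>\gamma+1$ the $\rho_N=\mathcal{O}(N^{-\gamma})$ term already dominates and $\delta_N=\mathcal{O}(N^{-\gamma})$ suffices; but your proof, unlike the paper's, should make the case split explicit.

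One small cleanup: your penultimate paragraph contains an internal contradiction ("since $p-1<\gamma$ here" immediately followed by the correction that $p-1>\gamma$). In the final write-up delete the false start and present only the two-case analysis, with the concluding remark that the bound $\mathcal{O}(N^{1-p})$ as literally stated holds only for $1<p<\gamma+1$, while for $p>\gamma+1$ one obtains $\mathcal{O}(N^{-\gamma})$, which is what the application actually needs.
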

\begin{proof}
	For all $k\geqslant0$, $1-\theta_{k}=\rho_{k+1}/\rho_{k}$, then we have
	\begin{equation*}
		\delta_{k+1}=(\rho_{k+1}/\rho_{k})\delta_{k}+\nu_{k}.
	\end{equation*}
	Dividing this equality by $\rho_{k+1}$, we have $\delta_{k+1}/\rho_{k+1}-\delta_{k}/\rho_{k}=\nu_{k}/\rho_{k+1}$. And then summing it from $k = 0$ to $k=N-1$, we can see that
	\begin{equation*}
		\delta_{N}/\rho_{N}=\sum_{k=0}^{N-1} \nu_{k} / \rho_{k+1}.
	\end{equation*}  
	Using \eqref{eq:lem4.5}, we obtain \eqref{eq:lem4.6}.
	
	In addition, due to $\{\beta_{k}\}$ is non-increasing, we can get that $\sum_{i=0}^{N-1}\beta_{i}^{-\frac{1}{\gamma}}\geqslant N\beta_{0}^{-\frac{1}{\gamma}}$ and $\sum_{i}\beta_{i}^{-\frac{1}{\gamma}}\rightarrow\infty$, so there exists a constant $c>1
	$ satisfy $1+\left(\sigma / \tau\right)^{\frac{1}{\gamma}} \sum_{i=0}^{k} \beta_{i}^{-\frac{1}{\gamma}} \leqslant c\left(\sigma / \tau\right)^{\frac{1}{\gamma}} \sum_{i=0}^{k} \beta_{i}^{-\frac{1}{\gamma}}$. Since $\nu_{k} \leqslant \mathcal{O}\left(\frac{\beta_{k}}{(k+1)^{p}}\right) $, there exists a constant $a$ satisfy $\nu_{k}\leqslant a \beta_{k}/(k+1)^{p}$. 
	We substitute them into \eqref{eq:lem4.6} and obtain that
	\begin{equation*}
		\begin{aligned}
			\delta_{N} 	& \leqslant \left(\gamma^{-1}\left(\sigma / \tau\right)^{\frac{1}{\gamma}} N\beta_{0}^{-\frac{1}{\gamma}}\right)^{-\gamma} \sum_{k=0}^{N-1}\left(c\left(\frac{\sigma}{\tau}\right)^{\frac{1}{\gamma}} \sum_{i=0}^{k} \beta_{i}^{-\frac{1}{\gamma}}\right)^{\gamma} \frac{a\beta_{k}}{(k+1)^{p}} \\
			& \leqslant \frac{\beta_{0} a c^{\gamma} \gamma^{\gamma}}{N^{\gamma}} \sum_{k=0}^{N-1}\left(\sum_{i=0}^{k} \beta_{i}^{-\frac{1}{\gamma}}\right)^{\gamma} \frac{\beta_{k}}{(k+1)^{p}} \\
			& =\frac{\beta_{0} a c^{\gamma} \gamma^{\gamma}}{N^{\gamma}} \sum_{k=0}^{N-1}\left(\sum_{i=0}^{k}\left(\frac{\beta_{k}}{\beta_{i}}\right)^{\frac{1}{\gamma}}\right)^{\gamma} \frac{1}{(k+1)^{p}} \\
			&\leqslant \frac{\beta_{0} a c^{\gamma} \gamma^{\gamma}}{N^{\gamma}} \sum_{k=0}^{N-1}(k+1)^{\gamma-p} .
		\end{aligned}
	\end{equation*}
	And then there exists a constant $b>0$ such that
	\begin{equation*}
		\sum_{k=0}^{N-1}(k+1)^{\gamma-p}=\sum_{k=1}^{N} k^{\gamma-p} \leqslant b \int_{1}^{N} t^{\gamma-p} \mathrm{d}t \leqslant b(\gamma+1-p)^{-1} N^{\gamma+1-p}.
	\end{equation*}
	Combining the above two inequalities, the lemma is proven.
\end{proof}

Combining Lemma \ref{lem:4.5} and Lemma \ref{lem:4.6}, we can give the following
concrete convergence rate of Algorithm \ref{alg:AIBPUOT} AIBPUOT.

\begin{thm}
		Assume that all conditions in Theorem \ref{thm:4.4} and Lemma \ref{lem:4.5} and \ref{lem:4.6} hold. Let $\{\mathbf{P}^{k}\}$ be the sequence generated by the AIBPUOT in Algorithm \ref{alg:AIBPUOT}. For any $\mathbf{P}^{*}$, which is the optimal solution of problem \eqref{eq:uot}, i.e.,$f(\mathbf{P})$, we have
		\begin{equation*}
			f(\mathbf{P}^{N})-f(\mathbf{P}^{*})\leqslant \mathcal{O}\left( \left(\sum_{k=0}^{N-1} \beta_{k}^{-\frac{1}{\gamma}}\right)^{-\gamma} \right) +\mathcal{O}\left(\frac{1}{N^{p-1}}\right).
		\end{equation*}
		In addition, if we choose $\beta_{k}$ as a constant $\beta$ and $p>\gamma+1$, we can get that
		\begin{equation*}
			f(\mathbf{P}^{N})-f(\mathbf{P}^{*})\leqslant\mathcal{O}\left(\frac{1}{N^{\gamma}}\right).
		\end{equation*}
\end{thm}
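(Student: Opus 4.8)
The plan is to read the claimed bounds directly off Theorem~\ref{thm:4.4} and then substitute the two quantitative estimates already established in Lemmas~\ref{lem:4.5} and~\ref{lem:4.6}. From Theorem~\ref{thm:4.4}, for any optimal solution $\mathbf{P}^{*}$,
\begin{equation*}
	f(\mathbf{P}^{N})-f(\mathbf{P}^{*})\leqslant \rho_{N}\left(f(\mathbf{P}^{0})-f(\mathbf{P}^{*})+\sigma D_{h}(\mathbf{P}^{*},\mathbf{P}^{0}) \right) +\delta_{N}.
\end{equation*}
The first step is to observe that the parenthesized factor is a fixed nonnegative constant independent of $N$: it is nonnegative since $f(\mathbf{P}^{0})\geqslant f(\mathbf{P}^{*})$ and $D_{h}\geqslant 0$, and finite since $\mathbf{P}^{*}\in\mathrm{dom}\,f$ and $\mathbf{P}^{0}$ lies in $\mathrm{int}\,\mathrm{dom}\,h$. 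Hence the first summand is $\mathcal{O}(\rho_{N})$.

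Next I would invoke Lemma~\ref{lem:4.5}: since the hypotheses of Lemma~\ref{lem:4.6} include that $\{\beta_{k}\}$ is non-increasing, we have $\sup_{k}\{\beta_{k}\}=\beta_{0}<\infty$, so Lemma~\ref{lem:4.5} gives $\rho_{N}=\mathcal{O}((\sum_{k=0}^{N-1}\beta_{k}^{-1/\gamma})^{-\gamma})$. Simultaneously, under the prescribed decay $\nu_{k}\leqslant\mathcal{O}(\beta_{k}/(k+1)^{p})$ with $p>1$ and $p\neq\gamma+1$, Lemma~\ref{lem:4.6} gives $\delta_{N}\leqslant\mathcal{O}(1/N^{p-1})$. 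Adding the two bounds produces the first displayed inequality of the theorem.

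For the specialization, put $\beta_{k}\equiv\beta$. Then $\sum_{k=0}^{N-1}\beta_{k}^{-1/\gamma}=N\beta^{-1/\gamma}$, so $(\sum_{k=0}^{N-1}\beta_{k}^{-1/\gamma})^{-\gamma}=\beta N^{-\gamma}=\mathcal{O}(1/N^{\gamma})$. If moreover $p>\gamma+1$, then $p-1>\gamma$, so the term $\mathcal{O}(1/N^{p-1})$ is dominated by $\mathcal{O}(1/N^{\gamma})$ and is absorbed into it. Combining the two pieces yields $f(\mathbf{P}^{N})-f(\mathbf{P}^{*})\leqslant\mathcal{O}(1/N^{\gamma})$, as claimed.

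Since the theorem is an assembly of facts proven earlier, there is no genuine analytical obstacle; the only point requiring care is checking that the hypotheses of the three cited results are mutually compatible --- in particular, that the prescribed rate for $\nu_{k}$ can actually be achieved by the inner Scaling loop of Algorithm~\ref{alg:AIBPUOT} while still satisfying the inexactness criterion \eqref{eq:abuot estinate}, and that the rule \eqref{eq:choosetheta} yields $\theta_{k}\in(0,1)$ so that $\rho_{N}\to 0$. I would state this compatibility explicitly rather than leave it implicit.
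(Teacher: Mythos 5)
Your proof is correct and takes essentially the same route as the paper: the paper leaves the proof implicit (``Combining Lemma \ref{lem:4.5} and Lemma \ref{lem:4.6}\ldots'') and you have simply filled in the substitution of the $\rho_N$ bound from Lemma \ref{lem:4.5}, the $\delta_N$ bound from Lemma \ref{lem:4.6}, and the constant-$\beta$ specialization. The closing remark about verifying that the hypotheses on $\nu_k$ and $\theta_k$ are simultaneously achievable is a useful caution, but since the theorem already assumes all conditions of the cited results, it is not a gap.
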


As we can see, the convergence rate of Algorithm \ref{alg:AIBPUOT} AIBPUOT is determined by the triangle scaling exponent (TSE) of $D_{h}$,i.e., $\gamma$. For AIBPUOT, the $D_{h}$ is based on $h(\bf{x})=\sum_{i}\bf{x}_{i}(\log \bf{x}_{i}-1)$, it implies that $\gamma=1$. In this case, the convergence rate of AIBPUOT is $\mathcal{O}\left(\frac{1}{N}\right)$, which means that no acceleration effect was achieved. But if we give a small $\epsilon>0$, $0<t<1$, for all $\theta\in[\epsilon^{\frac{1}{t}},1]$, we have
\begin{equation}
	D_{h}((1-\theta)\mathbf{x}+\theta\mathbf{y},(1-\theta)\mathbf{x}+\theta\mathbf{z})\leqslant \theta D_{h}(\mathbf{y},\mathbf{z})\leqslant\frac{1}{\theta^{t}}\theta^{1+t} D_{h}(\mathbf{y},\mathbf{z})\leqslant\frac{1}{\epsilon}\theta^{1+t} D_{h}(\mathbf{y},\mathbf{z}),
\end{equation}
which implies that \eqref{eq:tsp} hold for all $\theta\in[\epsilon^{\frac{1}{t}},1]$ with $\gamma=1+t $ and $\tau=\frac{1}{\epsilon}$. So if we want get a effective AIBPUOT, we need to give a small $\epsilon>0$, choose a $\theta_{k}$ satisfies \eqref{eq:choosetheta} and $\theta_{k}\in[\epsilon^{\frac{1}{t}},1]$. In this way, we have $\gamma=1+t$ the algorithm AIBPUOT can achieve acceleration in theory. But, according to our experiment,  if we set $\tau=\frac{1}{\epsilon}$ , we don't get a desired effect. So we finally adopt a heuristic strategy to choose $\tau$. Specifically, we firstly set $\tau =1$ and then substitute it by $\tau=2\tau$ if $\tau\theta_{k}^{t}<0.125$.

\begin{remk}[Explicit calculations of $\mathbf{Z}^{k+1}$]
	According to the definition of $\mathbf{Z}^{k+1}$, which is the optimal solution of $\phi_{k+1}(\mathbf{P})$, so we have 
	\begin{equation*}
		\begin{aligned}
			\mathbf{Z}^{k+1}&=\arg\min_{P}\left\{\phi_{k+1}(\mathbf{P})\right\}\\
			&=\arg\min_{P}\left\{ (1-\theta_{k})\phi_{k}(\mathbf{P})+\theta_{k}\beta_{k}\langle\nabla \mathit{h}(\mathbf{Y}^{k})- \nabla \mathit{h}(\mathbf{P}^{k+1}),\mathbf{P}\rangle \right\}\\
			&=\arg\min_{P}\left\{ (1-\theta_{k})(\phi_{k}(\mathbf{Z}^{k})+\sigma\rho_{k}D_{h}(\mathbf{P},\mathbf{Z}^{k}))+\theta_{k}\beta_{k}\langle\nabla \mathit{h}(\mathbf{Y}^{k})- \nabla \mathit{h}(\mathbf{P}^{k+1}),\mathbf{P}\rangle \right\}\\
			&=\arg\min_{P}\left\{ (1-\theta_{k})\sigma\rho_{k}D_{h}(\mathbf{P},\mathbf{Z}^{k})+\theta_{k}\beta_{k}\langle\nabla \mathit{h}(\mathbf{Y}^{k})- \nabla \mathit{h}(\mathbf{P}^{k+1}),\mathbf{P}\rangle \right\}\\
			&=\arg\min_{P}\left\{ \tau\beta_{k}\theta_{k}^{\gamma}D_{h}(\mathbf{P},\mathbf{Z}^{k})+\theta_{k}\beta_{k}\langle\nabla \mathit{h}(\mathbf{Y}^{k})- \nabla \mathit{h}(\mathbf{P}^{k+1}),\mathbf{P}\rangle \right\}\\
			&=\arg\min_{P}\left\{ \tau\theta_{k}^{\gamma-1}D_{h}(\mathbf{P},\mathbf{Z}^{k})+\langle\nabla \mathit{h}(\mathbf{Y}^{k})- \nabla \mathit{h}(\mathbf{P}^{k+1}),\mathbf{P}\rangle \right\}.
		\end{aligned}
	\end{equation*}
	The second equality follows from \eqref{eq:estimate4}, the third equality follows from Lemma \ref{lem:affin}, the fifth equality follows from \eqref{eq:choosetheta}. According to the optimal condition, we can obtain that
	\begin{equation*}
		\nabla h(\mathbf{Z}^{k+1})=\nabla h(\mathbf{Z}^{k})-\tau^{-1}\theta_{k}^{1-\gamma }(\nabla \mathit{h}(\mathbf{Y}^{k})- \nabla \mathit{h}(\mathbf{P}^{k+1})).
	\end{equation*}
	Therefore $\mathbf{Z}^{k+1}=\nabla h^{-1} \left(\nabla h(\mathbf{Z}^{k})-\tau^{-1}\theta_{k}^{1-\gamma }(\nabla \mathit{h}(\mathbf{Y}^{k})- \nabla \mathit{h}(\mathbf{P}^{k+1})) \right) $. Since $h(\bf{x})=\sum_{i}\bf{x}_{i}(\log \bf{x}_{i}-1)$, we have that $h^{-1}(\bf{x})=\sum_{i}e^{\bf{x}_{i}}$. Then we can see that
	\begin{equation}
		\mathbf{Z}^{k+1}=\mathbf{Z}^{k+1}\odot(\frac{\mathbf{P}^{k+1}}{\mathbf{Y}^{k}})^{\tau^{-1}\theta_{k}^{1-\gamma }}.
		\label{eq:choose z}
	\end{equation}
\end{remk}

\section{Numerical experiments}
\label{sec:5}

In this section, we will demonstrate the advantages of IBPUOT over the Scaling algorithm and MM algorithm \cite{chapel2021unbalanced} in terms of convergence, as well as in sparsity compared to the Scaling algorithm. In addition, we will also show the impact of different parameters $\beta$ on the convergence speed. We will then present the acceleration performance of our AIBPUOT.

\subsection{Implementation details.}
\label{sec:5.1}

 We compare the performance of different algorithms by calculating the UOT distance between two one-dimensional Gaussian distributions. The two Gaussian distributions we have selected are the source distribution $\mathcal{N}(20,5)+\mathcal{N}(50,9)$ and the target distribution $\mathcal{N}(60,10)$. $\mathcal{N}(\mu,\sigma^{2})$ is the probability density function for the one-dimensional Gaussian distribution, $\mu$ and $\sigma^{2}$ represent its mean and variance respectively.  We present these two distributions in Figure \ref{fig1}, where the blue represents the source distribution and the red represents the target distribution.  Input vectors $\mathbf{a}$ and $\mathbf{b}$ are the values of the source distribution and the target distribution discretized on the interval [1,100], with a grid size of 1. 
 
\begin{figure*}[htbp]
	\centering
	\includegraphics[width=0.6\columnwidth,height=0.3\linewidth]{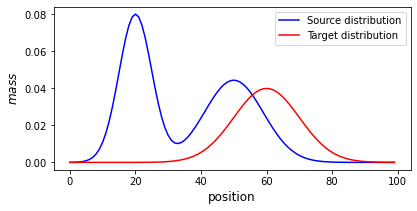}
	\caption{marginal distribution}
	\label{fig1}
\end{figure*}
 
 In addition, we set $\lambda_{1}=\lambda_{2}=1$ in the original problem \eqref{eq:uot}. Since we want to demonstrate the performance of various algorithms in terms of convergence, it is crucial to have the exact solution for the UOT distance between two distributions. However, currently there is no algorithm that computes the exact solution for the UOT problem based on KL divergence. Therefore, we choose to use an approximation that is close enough to the true solution to compare the convergence speeds of different algorithms, we call it 'aproxtruth'. Here, we select the result obtained by IBPUOT with $\beta_{k}=0.005$ after 10,000 outer iterations as this approximate exact solution. 
 In Figure \ref{fig2}, we demonstrate the convergence of IBPUOT in this computational process. The reason for choosing this value as an approximate exact solution is because we found in experiments that our algorithm has a higher accuracy compared to other algorithms for solving the UOT problem based on KL divergence (e.g., MM algorithm, Scaling algorithm). This will also be demonstrated in our subsequent experiments. 
 
 \begin{figure*}[htbp]
 	\centering
 	\includegraphics[width=0.7\columnwidth,height=0.3\linewidth]{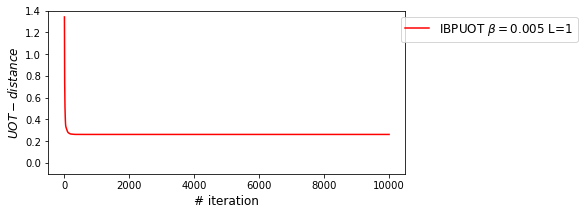}
 	\caption{Calculate the approximate exact solution}
 	\label{fig2}
 \end{figure*}
 
  To better compare the performance between algorithms, we choose to set the number of internal iterations $L=1$ for inexact proximal operator computation in IBPUOT and AIBPUOT in the subsequent algorithm comparisons. In this way, IBPUOT and AIBPUOT only perform one scaling iteration in each outer iteration, without the need to calculate the accuracy of internal iterations for inexact solutions, which can save a lot of computational costs. Experiments show that the algorithm can still converge and perform well in this scenario. 
  
\subsection{Comparison results}
\subsubsection{Convergence rate of the algorithm}
 In this section, we  will compare the algorithm convergence rates of IBPUOT algorithm with $\beta=1$, MM algorithm, Scaling algorithm with $\epsilon=0.01$, and $\epsilon=0.001$. We calculated the UOT distance of the problem given in Section \ref{sec:5.1} using these algorithms, and then calculate the difference between the solution obtained by the algorithm and the approximate accurate solution we selected. The relationship between the difference and the number of iterations is shown in Figure \ref{fig3} in the logarithmic domain. It is emphasized again that the number of iterations here is external iterations, but the number of internal iterations $L=1$ in IBPUOT. 
 
  \begin{figure*}[htbp]
 	\centering
 	\includegraphics[width=0.75\columnwidth,height=0.35\linewidth]{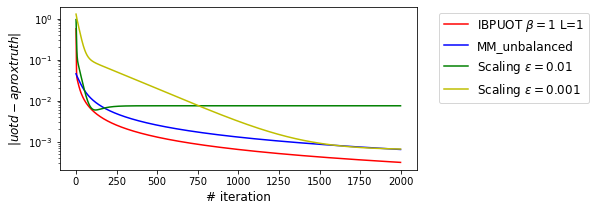}
 	\caption{Comparison of convergence rates of various algorithms }
 	\label{fig3}
 \end{figure*}
 
 Based on Figure \ref{fig3}, we can see that the convergence speed and accuracy performance of our IBPUOT at parameter $\beta=1$ are already better than the MM algorithm for solving similar problems. On the other hand,the Scaling algorithm still does not perform as well as IBPUOT at $\beta=1$ in terms of convergence speed and accuracy when $\epsilon=0.001$. This further confirms the theoretical advantages of our IBPUOT. Even taking a medium-sized parameter $\beta$ in IBPUOT can still yield good results.
 
 In addition, we also compared the influence of different parameters $\beta$ on the convergence speed of IBPUOT. In order to compare the influence of $\beta$, we set the internal iterations of all algorithms to 1, i.e., $L=1$.  We present the results in Figure \ref{fig4}.
   \begin{figure*}[htbp]
 	\centering
 	\includegraphics[width=0.75\columnwidth,height=0.35\linewidth]{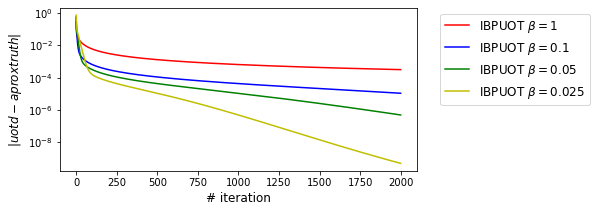}
 	\caption{Comparison of convergence rates of IBPUOT with different $\beta$ }
 	\label{fig4}
 \end{figure*}
 
 Although in general, the smaller the value of $\beta$ selected, the faster the overall convergence to the true solution, we can observe that in the first hundred iterations with $\beta=0.025$, the convergence speed is slower compared to larger values of $\beta$. This is because when we only perform one inner iteration, in the early stages of the algorithm, it leads to a large error in approximating the proximal operator of the small $\beta$, resulting in slower convergence of the algorithm. Therefore, when only performing one inner iteration and a small number of iteration processes, it is not advisable to choose a very small $\beta$.

\subsubsection{Sparsity of the Transport Plan}
 In applications such as color transfer and domain adaptation, we need accurate and sparse transfer plans. Therefore, in this section, we will test the sparsity of the transfer plans of IBPUOT and the Scaling algorithm using the two distributions in Figure 1. We will show the comparison results of the sparsity of the transfer plans in Figure \ref{fig5}. 
\begin{figure}[htbp]
	\centering
	\subfigure[IBPUOT $\beta=0.1$]
	{
		\begin{minipage}[b]{.3\linewidth}
			\centering
			\includegraphics[scale=0.4]{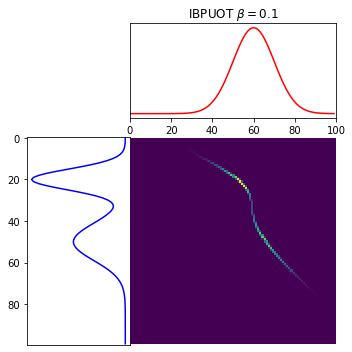}
		\end{minipage}
	}
	\subfigure[Scaling $\epsilon=0.01$]
	{
		\begin{minipage}[b]{.3\linewidth}
			\centering
			\includegraphics[scale=0.4]{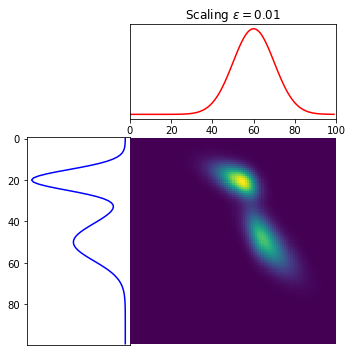}
		\end{minipage}
	}
	\subfigure[Scaling $\epsilon=0.001$]
	{
		\begin{minipage}[b]{.3\linewidth}
			\centering
			\includegraphics[scale=0.4]{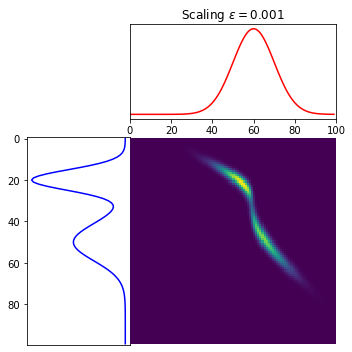}
		\end{minipage}
	}
	\caption{The transport plan of IBPUOT and Scaling algorithm}
	\label{fig5}
\end{figure}

According to Figure \ref{fig5}, we can see that the proposed IBPUOT always converges to a sparse optimal transportation plan. In contrast, for the Scaling algorithm, if we choose a larger regularization parameter $\epsilon$, then the optimal transportation plan is blurred and lacks sparsity. On the other hand, if we choose a smaller regularization parameter $\epsilon$, although sparsity is better, more iterations are needed for the algorithm to converge. Additionally, we can see that the performance in terms of sparsity at $\epsilon=0.001$ is not as good as the IBPUOT with $\beta=0.1$. 
If we need to use Scaling algorithms with smaller regularization parameters to obtain sparser solutions, numerical underflow will occur. For example, in this problem, when the regularization parameter $\epsilon=0.0001$, numerical underflow occurs. However, our IBPUOT can avoid the need for carefully selecting the regularization parameter.

 \subsubsection{Acceleration effect}
 
  In this section, we will compare the convergence speed of IBPUOT and its accelerated version AIBPUOT, in order to demonstrate the effectiveness of our acceleration method. We solve the problem in Section \ref{sec:5.1} using two algorithms and calculate the difference between the corresponding UOT distances and our approximate exact solution. We compare the trend of the difference and the number of iterations in the logarithmic domain. The iteration count here remains the number of outer iterations, but we set the inner iterations for IBPUOT and AIBPUOT to satisfy $L=1$. Here, we compared the acceleration effect of AIBPUOT relative to IBPUOT under different $\beta$.  We show the comparison results in Figure \ref{fig6}
  
  \begin{figure}[htbp]
  		\centering
  	\subfigure[IBPUOT and AIBPUOT with $\beta=1$]
  	{
  		\begin{minipage}[b]{.5\linewidth}
  			\centering
  			\includegraphics[scale=0.5]{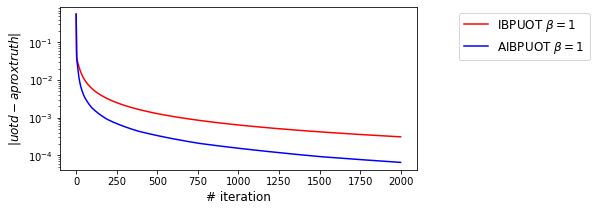}
  		\end{minipage}
  	}
  	\subfigure[IBPUOT and AIBPUOT with $\beta=0.1$]
  	{
  		\begin{minipage}[b]{.5\linewidth}
  			\centering
  			\includegraphics[scale=0.5]{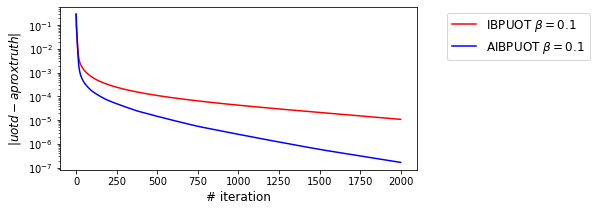}
  		\end{minipage}
  	}
  	\caption{The transport plan of IBPUOT and Scaling algorithm}
  	\label{fig6}
  \end{figure}

We can see that when the same parameter $\beta$ is selected, our AIBPUOT can achieve higher accuracy with the same number of external iterations, and the acceleration effect is very obvious. This is very advantageous for some computationally intensive problems.

\section{Conclusion}

In this paper, we propose an algorithm IBPUOT for solving UOT based on the inexact Bregman proximal point method, and prove its theoretical convergence rate to be $\mathcal{O}(\frac{1}{N})$. Through numerical experiments, our algorithm shows advantages in stability and convergence compared to the Scaling algorithm. Additionally, IBPUOT can converge to the true solution even if only one inner iteration is performed in each round. Furthermore, we develop an accelerated version of IBPUOT called AIBPUOT, and prove its convergence rate to be $\mathcal{O}(\frac{1}{N^\gamma})$. AIBPUOT exhibits acceleration when the triangle scaling exponent $\gamma>1$. 
We also conducted numerical experiments, which showed that AIBPUOT does indeed have an acceleration effect in solving the UOT problem.

\section*{Acknowledgment}
 Jun Liu was supported by the National Natural Science Foundation of China (No. 12371527). Faqiang Wang was supported by the Fundamental Research Funds for the Central Universities. Faqiang Wang was also supported by the National Natural Science Foundation of China (No. 12101058). Li Cui was supported by the National Natural Science Foundation of China (No. 12171043).

\bibliography{reference}

\end{document}